\documentclass[12pt,a4paper]{article}
\usepackage{natbib}
\usepackage{float}
\usepackage{rotating}
\usepackage{multirow,bbm}
\usepackage{amssymb,amsmath, amsfonts, booktabs}
\usepackage{lscape}
\usepackage{enumerate}
\usepackage{amsthm}
\usepackage[T1]{fontenc}
\usepackage[utf8]{inputenc}
\usepackage{subcaption}
\usepackage{lmodern}
\usepackage[english]{babel}
\usepackage{pdflscape}
\usepackage{csquotes}
\usepackage[colorlinks, citecolor=blue]{hyperref}
\usepackage{mathtools}

\usepackage{xcolor, natbib}
\numberwithin{equation}{section}
\renewcommand{\baselinestretch}{1}
\usepackage{graphicx}
\usepackage{caption}
\newtheorem{theorem}{Theorem}[section]
\newtheorem{exe}{Example}[section]

\newtheorem{c1}{Corollary}[section]

\setcitestyle{authoryear,open={(},close={)}} 

\begin{document}
\renewcommand{\baselinestretch}{1.2}
\renewcommand{\theequation}{\thesection.\arabic{equation}}
	\begin{center}
	{\Large\bf A Study on Cumulative Residual Extropy of Linear Consecutive k-out-of-n:G Systems} \\
	\vspace{0.3in}
	
	{\bf Aman Pandey\footnote{Corresponding author e-mail: amanp23ma@rgipt.ac.in, amanp23m@gmail.com~(Aman Pandey).}}~~~~~~~\textbf{Chanchal Kundu}
	\\
	Department of Mathematical Sciences\\ Rajiv Gandhi Institute of Petroleum Technology\\Jais, U.P., India, 229304\\
\begin{center}
    {May, 2026}
\end{center}
\end{center}
\begin{abstract}
In this article, we investigate the cumulative residual extropy associated with linear consecutive $k$-out-of-$n{:}G$ systems, which play an important role in reliability theory and engineering applications. We first derive explicit expressions for the proposed measure and examine the behavior of cumulative residual extropy  under a variety of stochastic orderings. In addition, several  bounds and meaningful results of the characterization are established. Moreover, we also introduced the dynamic version of the cumulative residual extropy and explored the relationship between the proposed dynamic version and the mean residual life function.
From an inferential perspective, we develop a nonparametric estimation procedure for the cumulative residual extropy and establish the corresponding consistency properties of the estimator. The finite-sample performance of the proposed estimator is further investigated through extensive Monte Carlo simulation studies under different parametric settings and validated through a real dataset. 
\end{abstract}
\noindent
\textbf{Keywords and Phrases:} Consecutive $k$-out-of-$n{:}G$ systems; Cumulative residual extropy; Non-parametric estimation; Stochastic order.\\
\noindent
\textbf{MSC (2020): 62N05, 62G05.}
\section{Introduction}
Entropy and its numerous extensions have long served as fundamental quantitative tools for measuring uncertainty and information associated with random phenomena. Since the seminal contribution of \cite{shannon1948}, entropy-based methodologies have found wide-ranging applications in reliability theory, statistical physics, information science, econometrics, and biological sciences. In recent years, the notion of \emph{extropy}, introduced by \cite{lad2015}, has attracted considerable attention as a complementary counterpart to entropy, offering an alternative framework for assessing uncertainty.

Let $X$ be an absolutely continuous non-negative random variable (RV) with probability density function (PDF) $f(w)$ and survival function (SF) $\bar F(w)$. The differential extropy of $X$ is defined as
\begin{align}\label{eq1.1}
    \mathfrak{J}(X)
    =
    -\frac{1}{2}\int_{0}^{\infty} f^{2}(w)\,dw,
\end{align}
provided that the integral exists. In contrast to Shannon entropy, which quantifies uncertainty through logarithmic information content, extropy captures the concentration characteristics of the underlying distribution by means of the squared PDF. Owing to its mathematical simplicity and interpretational relevance, extropy and its generalized forms have become increasingly important in information theory and reliability analysis.

However, in many practical situations PDF may either fail to exist or may not be readily estimable from observed data. Motivated by this limitation, \cite{jahan2019} introduced the concept of \emph{cumulative residual extropy} (CREx) by replacing PDF in \eqref{eq1.1} with SF $S(w)$. The CREx measure is defined as
\begin{align}\label{eq1.2}
    J(X)
    =
    -\frac{1}{2}\int_{0}^{\infty} \bar F^{2}(w)\,dw,
\end{align}
where $\bar F(w)=1-F(w),$ $F(w)$ is the cumulative distribution function (CDF).
The CREx measure possesses several advantages over differential extropy. Since the SF is generally more regular than PDF, CREx remains well-defined even in situations where the PDF is unavailable or difficult to handle analytically. Furthermore, from a statistical perspective, the CDF and  SF can be conveniently estimated through the empirical distribution function (EDF), and such estimators are known to be consistent. Consequently, CREx admits natural empirical estimators based on the EDF, whereas the differential extropy cannot be directly estimated in a similar manner.

Subsequently, \cite{sathar2021} investigated dynamic versions of CREx and studied several characterization properties together with nonparametric estimation procedures. Further developments concerning estimation methodologies and inferential aspects of CREx may be found in \cite{kaz2021}, \cite{kattu2022}, \cite{pakda2025}, and \cite{chak2025}. Although entropy-based measures have been extensively utilized in a variety of applied domains, the practical potential of extropy and CREx has not yet been explored to the same extent. Motivated by this observation, the present work employs the CREx measure to address several problems arising in reliability engineering.

The \emph{linear consecutive $k$-out-of-$n$} (denoted by $\mathcal C(k|n{:}G)$) systems play an important role in reliability theory due to their ability to model several practical engineering systems. A linear consecutive $\mathcal C(k|n{:}G)$ system consists of $n$ linearly ordered components and the system functions if and only if at least $k$ consecutive components are operating. Such systems arise naturally in many real-life applications. For instance, in a pipeline network, uninterrupted flow requires several adjacent pipeline segments to function simultaneously. In telecommunication systems, successful signal transmission may depend on the operation of consecutive relay stations along a communication line. Similarly, in railway track monitoring systems, consecutive functioning sensors are required for reliable detection and control.

The linear $\mathcal C(k|n{:}G)$ model generalizes the classical series and parallel systems. In particular, when $k=n$, the system operates only if all components function, which corresponds to a series system. On the other hand, when $k=1$, the system functions whenever at least one component operates, yielding a parallel system. Hence, consecutive systems provide a flexible framework for modeling complex reliability structures.

Consider a linear $\mathcal C(k|n{:}G)$ system consisting of $n$ components arranged sequentially, where the system operates whenever at least $k$ consecutive components are functioning. Let $X_1,X_2,\ldots,X_n$ denote the lifetimes of the components, assumed to be independent and identically distributed (iid) with common SF $S(w)=P(X>w)$. The lifetime of the system is denoted by $T_{k|n:G}$. A particularly important case arises when $2k\geq n$. Under this condition, it is impossible for two non-overlapping runs of $k$ functioning components to occur simultaneously, which considerably simplifies the reliability analysis of the system. Using this observation, \cite{ery2009} obtained the SF of $\mathcal C(k|n{:}G)$ system as
\begin{align}\label{eq1.3}
    \bar F_{k|n:G}(w)
    =
    (n-k+1)\bar F^{k}(w)
    -
    (n-k)\bar F^{k+1}(w),
    \quad w>0,
\end{align}
where $S(w)$ represents the common SF of the components.

The information-theoretic properties of reliability systems and order statistics have attracted considerable attention in the literature. \cite{qiu2019} investigated the extropy of mixed systems and derived an explicit expression for the extropy of the system lifetime, while also introducing the Jensen--extropy divergence as an information measure for comparing mixed systems with homogeneous components. Later, \cite{chak2024} studied CREx for coherent and mixed systems with both iid  and dependent and identically distributed  components. They established several bounds, proposed divergence and discrimination measures for comparing system structures, and analyzed coherent systems with heterogeneous components in redundancy allocation problems.

More recently, \cite{kayid2024} investigated the Shannon differential entropy of the $\mathcal C(k|n{:}G)$ systems by deriving explicit expressions, characterization results, and nonparametric estimators for the proposed entropy measure. Subsequently, \cite{kayid2025} studied the cumulative residual entropy of the $\mathcal C(k|n{:}G)$ systems and obtained explicit expressions together with several important properties. Additional developments on the $\mathcal C(k|n{:}G)$ systems can be found in \citet{dem2025}, \cite{tian2026}, \cite{yi2026}, and \cite{ery2026}, and the references therein.

To the best of our knowledge,  CREx of the $\mathcal C(k|n{:}G)$ systems has not yet been investigated in the literature. Motivated by the increasing interest in information-theoretic measures for reliability systems, as well as the recent developments on entropy and extropy measures for coherent and consecutive systems, we study the CREx associated with the lifetime of the $\mathcal C(k|n{:}G)$ systems. Since consecutive systems arise naturally in reliability engineering, communication networks, and industrial applications, understanding their uncertainty behavior through CREx is of considerable theoretical and practical importance.

In this paper, we establish an explicit representation for the CREx of the lifetime of a $\mathcal C(k|n{:}G)$ system. Using the proposed representation, we establish several comparison results under different stochastic orders together with a number of useful bounds and characterization results. The obtained results enrich the existing literature on information measures in reliability theory and provide further insights into the uncertainty characteristics of consecutive systems.

The remainder of the paper is structured as follows. Section~2 introduces the CREx measure for the $\mathcal C(k|n{:}G)$ systems and investigates its properties under various stochastic orders, along with several associated bounds and characterization results. Section~3 develops a nonparametric estimation procedure and establishes its consistency, supplemented by a comprehensive simulation study and real data analysis. In Section~4, an application of the proposed CREx measure in image processing is presented. The dynamic version of CREx introduces for the $\mathcal C(k|n{:}G)$ systems in Section 5. Finally, Section~6 summarizes the key findings of the study and outlines potential avenues for future research.
\section{CREx of consecutive \texorpdfstring{$\mathcal C({k|n{:}G})$}~~\text{system}}
In this section, we introduce CREx associated with the $\mathcal C(k|n{:}G)$ systems and investigate several of its structural properties, including bounds and characterization results. 
Our analysis starts with the derivation of an explicit expression for the CREx of the system lifetime $T_{k|n:G}$, assuming that the component lifetimes are iid RVs having an absolutely continuous CDF $F$. 
The obtained representation serves as a foundation for the subsequent theoretical developments presented in this section.

Define
$
V_{k|n:G}=F(T_{k|n:G}),
$
and for each component lifetime $X_i$, let
$
V_i=F(X_i), \quad i=1,\ldots,n.
$
Since $F$ is absolutely continuous, the RVs $V_1,\ldots,V_n$ are iid standard uniform RVs on $(0,1)$.

Proceeding similarly to \eqref{eq1.3}, for  $2k\geq n$, the SF of $V_{k|n:G}$ is given by
\begin{align}\label{eq:2.1}
\bar G_{k|n:G}(v)
=
(n-k+1)(1-v)^k
-(n-k)(1-v)^{k+1},
\end{align}
for $0<v<1$.

Using the definition of CREx together with the transformation $v=F(w)$, we obtain
\begin{align}
J(T_{k|n:G})
&=
-\frac12\int_0^\infty
\Phi\!\left(
\bar F_{k|n:G}(w)
\right)\,dw \nonumber\\
&=
-\frac12\int_0^\infty
\Phi\!\left(
(n-k+1)\bar F^k(w)
-(n-k)\bar F^{k+1}(w)
\right)\,dw,
\label{eq:2.2}
\end{align}
where $\Phi(t)=t^2, ~0<t<1$.
Now, employing the probability integral transformation $v=F(w)$ together with
$
dw=\dfrac{dv}{f(F^{-1}(v))}
$
and $\bar F(w)=1-v$ in \eqref{eq:2.2}, we get
\begin{align}
J(T_{k|n:G})
&=
-\frac12
\int_0^1
\frac{
\Phi\!\left(
(n-k+1)(1-v)^k
-(n-k)(1-v)^{k+1}
\right)
}{
f(F^{-1}(v))
}\,dv \nonumber\\
&=
-\frac12
\int_0^1
\frac{
\Phi\!\left(
\bar G_{k|n:G}(v)
\right)
}{
f(F^{-1}(v))
}\,dv.
\end{align}
The last equality is obtained by using \eqref{eq:2.1}.

In the sequel, we illustrate CREx with the following example.
\begin{exe}
Let $T_{k|10{:}G}$ represents the lifetime of a $\mathcal C(k|10{:}G)$ system for $k=2,3,4,5,$ and $6$, where each component lifetime characterize by a Lomax distribution with CDF
    \begin{align*}
        F(w)=1-\exp\left(1-\left(\frac{w}{\eta}\right)^{\beta}\right),~w>0,~\beta>0,~\eta>0.
    \end{align*}
    The corresponding PDF evaluated at $F^{-1}(v)$ is $f(F^{-1}(v))=\frac{\beta}{\eta}(1-v)\left(-\log(1-v)\right)^{\frac{\beta-1}{\beta}}$.Obtaining a closed-form analytical expression for CREx is mathematically intractable. Therefore, we investigate its behaviour numerically. In Figure~\ref{fig1}(a), we fix $\beta=2.5$ and vary the scale parameter over $\eta\in(0,4)$. Similarly, in Figure~\ref{fig1}(b), we fix $\eta=2.5$ and consider the shape parameter in the range $\beta\in(1,10)$. These numerical results illustrate the influence of the Weibull distribution parameters on the CREx measure.

 Since CREx is an extropy-based measure, smaller values of CREx correspond to greater uncertainty in the system behaviour. From Figure~\ref{fig1}(a), it is observed that CREx decreases as the scale parameter $\eta$ increases for all considered values of $k$, indicating an increase in uncertainty and hence a decrease in predictability of the system lifetime. Likewise, Figure~\ref{fig1}(b) shows that CREx decreases with increasing values of the shape parameter $\beta$, which also reflects higher uncertainty and lower predictability.
\begin{figure}[htbp]
    \centering
    \begin{minipage}{0.45\textwidth}
        \centering
        \includegraphics[width=7.5cm,height=6cm]{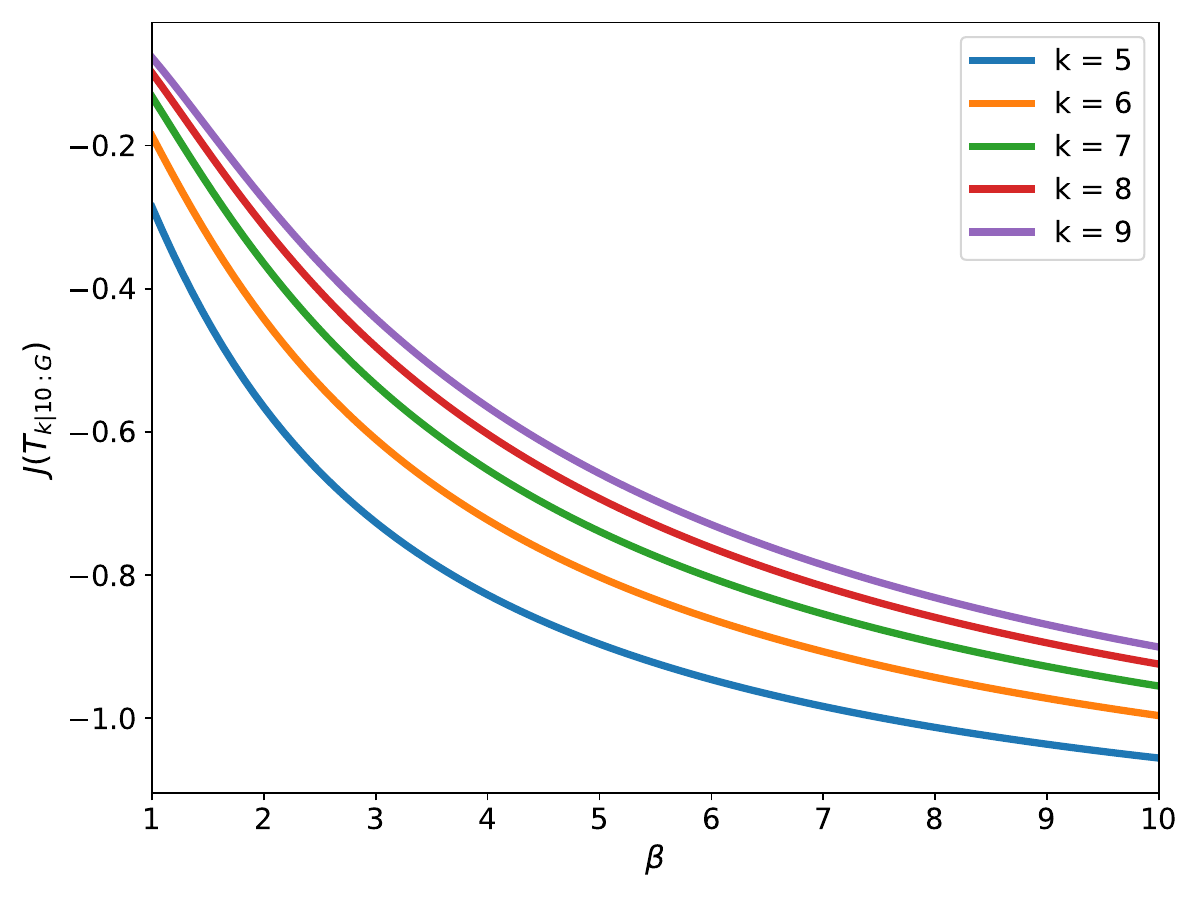}
        \subcaption{CREx versus the scale parameter $\eta$ for fixed $\beta=2.5$.}
    \end{minipage}
    \hspace{0.5cm}
    \begin{minipage}{0.45\textwidth}
        \centering
        \includegraphics[width=7.5cm,height=6cm]{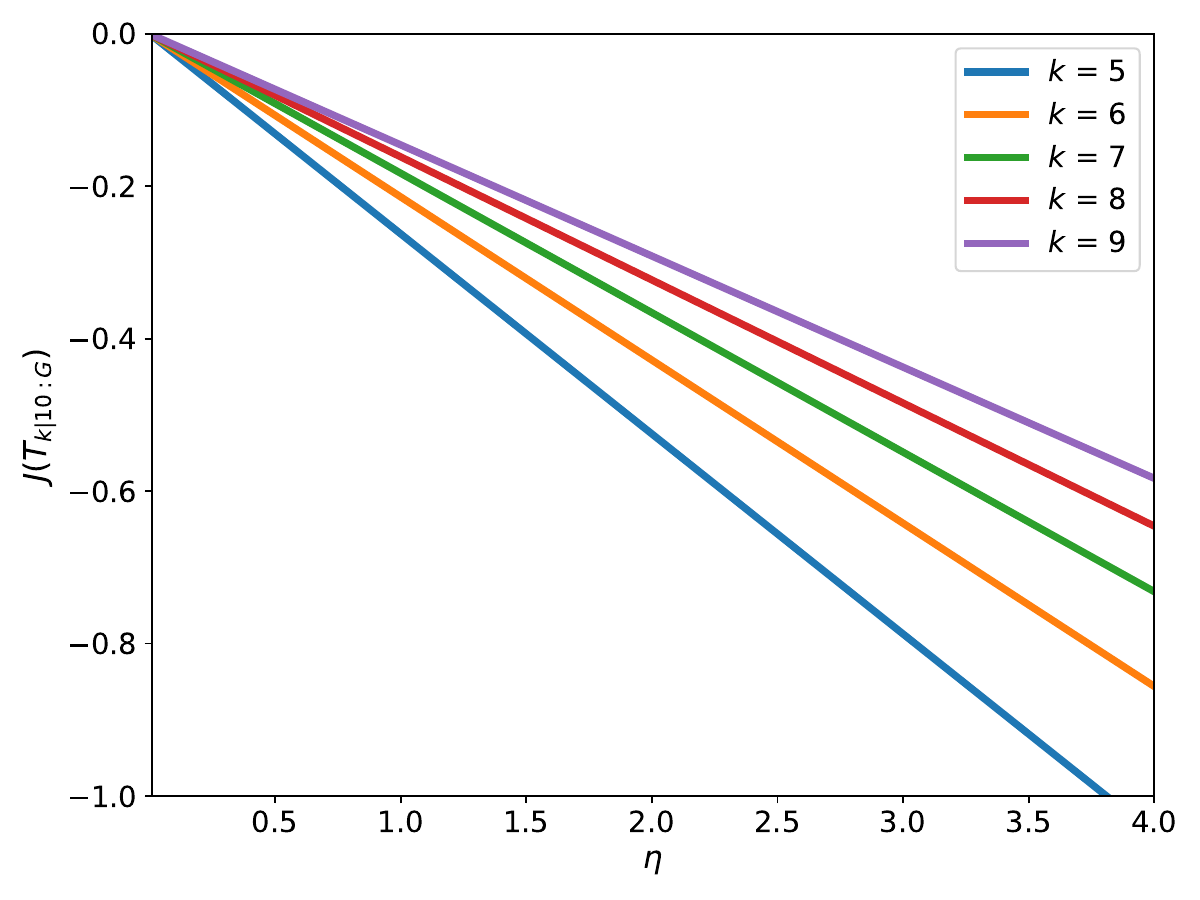}
        \subcaption{CREx versus the shape parameter $\beta$ for fixed $\eta=2.5$.}
    \end{minipage}
    \caption{The plots of CREx for $\mathcal C(k|n{:}G)$ systems under the Weibull distribution.}\label{fig1}
\end{figure}
\end{exe}
Now, we recall some stochastic orders, which will be used to obtain bounds and characterizations. For more details, one may refer to \cite{shaked2007}. Let $X$ and $Y$ be RVs with PDFs $f_{X},f_{X}$ and SFs $\bar F_{X},\bar F_{Y}$, respectively.
\begin{enumerate}
    \item \textbf{Stochastic Order.}  
     $X_1$ is said to be smaller than $X_2$ in the usual stochastic order, denoted by
    $X \leq_{st} X$, whenever $\bar F_{X}(w)\leq \bar F_{Y}(w), ~\forall w.$
    \item \textbf{Hazard Rate Order.} 
   $X$ is said to be smaller than $Y$ in the hazard rate order, written as
    $X \leq_{hr} Y$, if $ \bar h_X(t)\geq \bar h_Y(t), ~ t\geq 0,$
    or, equivalently, if $\frac{\bar F_{Y}(t)}{\bar F_{X}(t)}$
    is an increasing function of $t$. 
    The hazard rate function is defined by $\bar h_X(t)=\frac{f_{X}(t)}{\bar F_{X}(t)}$.

    \item \textbf{Dispersive Order.} 
     $X$ is said to be smaller than $Y$ in the dispersive order, denoted by
    $X \leq_{disp} Y$, if
$F_{X}^{-1}(u_1)-F_{X}^{-1}(u_2)\geq  F_{Y}^{-1}(u_1)-F_{Y}^{-1}(u_2),
        ~~ 0<u_1<u_2<1.$
\end{enumerate}
\cite{jew1989} proposed the location-independent riskier (lir) order, motivated by considerations arising in expected utility theory and actuarial problems. It is a well-established fact that
\begin{align*}
    X\leq_{disp}Y
    \iff 
    f_{Y}\!\left(F_{Y}^{-1}(u)\right)
    \leq
    f_{X}\!\left(F_{X}^{-1}(u)\right),~ 0<u<1.
\end{align*}
Moreover, the following chain of implications holds:
\begin{align}\label{eq2.3}
    \text{If either $X$ or $Y$ is DFR, then }
    X\leq_{disp}Y
    \implies
    X\leq_{lir}Y.
\end{align}

Combining relation \eqref{eq1.2} with \eqref{eq2.3}, one immediately obtains that
\begin{align}\label{eq::2.4}
    X\leq_{disp}Y
    \quad \Longrightarrow \quad
    J(X)\geq J(Y).
\end{align}

Now, let $Z$ be a nonnegative RV with CDF $\bar F_{Z}$, and define
\begin{align*}
    \eta_Z(w)
    =
    \int_0^w \bar F_Z(t)\,dt,~~ w>0.
\end{align*}
\cite{land1994} established the following characterization:
\begin{align}\label{eq2.4}
    X\leq_{lir}Y
    \iff
    \eta_Y^{-1}(w)-\eta_X^{-1}(w)
    \text{ is increasing in } w>0.
\end{align}

Before proceeding to the next theorem, we now establish that the proposed measure  reverses the hazard rate ordering under the DFR condition.
\begin{c1}
Suppose that $X\leq_{hr}Y$ and assume further that either $X$ or $Y$ possesses the DFR property. Then, $J(X)\geq J(Y).$
\end{c1}
\begin{proof}
If $X\leq_{hr}Y$, then $\bar h_X(w)\geq \bar h_Y(w),~ w\geq0,$
where $\bar h_X(w)=f_X(w)/\bar F_X(w)$ is the hazard rate function. Hence, $\int_0^w \bar h_X(t)\,dt
\geq
\int_0^w \bar h_Y(t)\,dt .$
Since $\bar F(w)=\exp\!\left(-\int_0^w \bar h(t)\,dt\right),$
it follows that $\bar F_X(w)\leq \bar F_Y(w), ~w\geq0.$
Therefore, $J(X)\geq J(Y).$
Hence the proof.
\end{proof}
\begin{theorem}
For a DFR RV $X$, the inequality
 $J(X_{1:k})
    \geq
    J(T_{k|n:G})$ holds whenever $2k\geq n$.
\end{theorem}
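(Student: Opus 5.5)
The plan is to reduce the claim to the Corollary just proved, by showing that $X_{1:k}\leq_{hr}T_{k|n:G}$ and that one of the two variables is DFR. Here $X_{1:k}=\min(X_1,\ldots,X_k)$ is the series system of $k$ iid components, with SF $\bar F_{X_{1:k}}(w)=\bar F^{k}(w)$. For $2k\geq n$ the SF of $T_{k|n:G}$ is given by \eqref{eq1.3}.

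\textbf{Step 1 (hazard rate order).} Take the ratio of the two SFs:
\begin{align*}
\frac{\bar F_{k|n:G}(w)}{\bar F^{k}(w)}=(n-k+1)-(n-k)\bar F(w)=1+(n-k)F(w).
\end{align*}
Since $n-k\geq 0$ and $F$ is nondecreasing, this ratio is increasing in $w$. By the definition of the hazard rate order recalled above, this gives $X_{1:k}\leq_{hr}T_{k|n:G}$.

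\textbf{Step 2 (DFR property).} The hazard rate of $X_{1:k}$ is $k\,\bar h_X(w)$, where $\bar h_X$ is the hazard rate of $X$. Because $X$ is DFR, $\bar h_X$ is decreasing, so $k\,\bar h_X$ is decreasing as well. Hence $X_{1:k}$ is DFR, which supplies the hypothesis of the Corollary.

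\textbf{Step 3 (conclusion).} Applying the Corollary to the pair $(X_{1:k},T_{k|n:G})$ yields $J(X_{1:k})\geq J(T_{k|n:G})$.

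There is also a direct check, which I will record as a remark. Step 1 already gives $\bar F_{k|n:G}(w)\geq \bar F^{k}(w)$ for every $w$, so $\int_0^\infty \bar F_{k|n:G}^2(w)\,dw\geq \int_0^\infty \bar F^{2k}(w)\,dw$. By \eqref{eq1.2} this is exactly the claimed inequality. The same fact is clear structurally: whenever the first $k$ components all work, they form a run of $k$ consecutive working components, so $T_{k|n:G}\geq X_{1:k}$ pointwise.

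The only delicate point is making sure the Corollary's hypothesis is verified for one of the two variables. Checking DFR for $X_{1:k}$ in Step 2 is simpler than checking it for $T_{k|n:G}$, so I would use $X_{1:k}$. Beyond that I expect no real obstacle; the remark shows that the DFR assumption is not actually needed for the conclusion.
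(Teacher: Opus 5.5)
Your proof is correct and follows the same route as the paper. Both arguments establish $X_{1:k}\leq_{hr}T_{k|n:G}$, note that $X_{1:k}$ inherits DFR, and apply the preceding corollary. The difference is that you verify the hazard rate order directly from the ratio $\bar F_{k|n:G}(w)/\bar F^{k}(w)=1+(n-k)F(w)$ instead of citing Theorem 4.5 of \cite{ery2012}. Your remark is also right: only $\bar F^{k}\leq \bar F_{k|n:G}$ is needed, so the DFR hypothesis is superfluous, and indeed the corollary's own proof never uses it.
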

\begin{proof}
Since $X$ is assumed to be DFR, the series system lifetime $X_{1:k}$ also inherits the DFR property. In addition, Theorem 4.5 of \cite{ery2012} asserts that
$X_{1:k}\leq_{hr}T_{k|n:G},
    ~ 2k\geq n.$
Therefore, by the preceding corollary,
$J(X_{1:k})
    \geq
    J(T_{k|n:G}).$
Hence, the desired result follows.
\end{proof}
\begin{theorem}
Let $X\leq_{lir}Y$. Further, assume that
\begin{align*}
    \Phi(t):=
    \frac{\Phi(\bar G_{k|n:G}(s))}{s},~0<s<1,
\end{align*}
is decreasing in $s$. Then, $J(T_{k|n:G}^X)\leq J(T_{k|n:G}^Y).$
\end{theorem}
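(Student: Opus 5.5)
The plan is to rewrite $J(T_{k|n:G})$ as an integral over the ``accumulated survival'' scale $w=\eta(x)=\int_0^x\bar F(t)\,dt$, where the characterization \eqref{eq2.4} of the lir order applies directly. I read the hypothesis as saying that
\[
\psi(s):=\frac{\Phi\bigl(\bar G_{k|n:G}(s)\bigr)}{s},\qquad 0<s<1,
\]
is decreasing, with $s$ standing for the component survival probability. In that reading $\bar G_{k|n:G}(s)=(n-k+1)s^k-(n-k)s^{k+1}$, the system survival expressed through $s=\bar F(x)$ as in \eqref{eq1.3}. Then $\Phi(\bar F_{k|n:G}(x))=\psi(\bar F(x))\,\bar F(x)$. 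Since $\bar G_{k|n:G}(s)\le 1$ and vanishes like $s^k$ at $0$, $\psi$ is nonnegative and bounded, and it extends continuously to $s=0$ (with $\psi(0)=0$ if $k\ge1$).

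\textbf{Step 1 (change of variables).} For $Z\in\{X,Y\}$, substitute $w=\eta_Z(x)$, so that $dw=\bar F_Z(x)\,dx$. This gives
\[
J(T^Z_{k|n:G})=-\frac12\int_0^\infty \psi(\bar F_Z(x))\,\bar F_Z(x)\,dx=-\frac12\int_0^{\mu_Z}\psi\bigl(\bar F_Z(\eta_Z^{-1}(w))\bigr)\,dw,
\]
where $\mu_Z=\eta_Z(\infty)=E(Z)$. Setting the integrand to zero for $w\ge\mu_Z$ lets both integrals run over $(0,\infty)$.

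\textbf{Step 2 (using lir).} By \eqref{eq2.4}, $\eta_Y^{-1}(w)-\eta_X^{-1}(w)$ is increasing in $w$. Differentiating, and using $(\eta_Z^{-1})'(w)=1/\bar F_Z(\eta_Z^{-1}(w))$, gives a pointwise comparison between $\bar F_X(\eta_X^{-1}(w))$ and $\bar F_Y(\eta_Y^{-1}(w))$. The same monotonicity, started from the value $0$ at $w=0$, orders $\eta_X^{-1}$ and $\eta_Y^{-1}$, and hence orders the upper limits $\mu_X,\mu_Y$. Because $\psi$ is decreasing, the pointwise comparison of the survival values becomes a comparison of the integrands $\psi(\cdot)$ on the common range $(0,\min(\mu_X,\mu_Y))$. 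On the leftover piece the integrand is nonnegative and the other integrand is zero. Integrating the two comparisons and multiplying by $-\tfrac12$ should give $J(T^X_{k|n:G})\le J(T^Y_{k|n:G})$.

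\textbf{Main obstacle: sign and orientation bookkeeping.} Taken literally, monotonicity of $\eta_Y^{-1}-\eta_X^{-1}$ gives $\bar F_Y(\eta_Y^{-1})\le\bar F_X(\eta_X^{-1})$, hence $\psi$ evaluated for $Y$ is at least $\psi$ evaluated for $X$. It also gives $\mu_Y\le\mu_X$. These two effects pull in opposite directions, and the first one on its own points toward the reverse inequality. So I would first fix the orientation of \eqref{eq2.4}, checking which variable is the ``riskier'' one in the Jewitt / Landsberger--Meilijson convention. I expect the correct reading to be the one giving $\bar F_X(\eta_X^{-1}(w))\le\bar F_Y(\eta_Y^{-1}(w))$ and $\mu_X\le\mu_Y$; then both effects point the same way and the stated conclusion follows.

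If the orientation cannot be reconciled that way, my fallback is to state the result with the inequality matching \eqref{eq2.4} as written. An alternative is to replace ``decreasing'' by ``increasing'' for $\psi$ and compare only on the common range, where the leftover region has the right sign. I would also check at the endpoints that the substitution is legitimate (for instance, that $\mu_Z<\infty$ or that the integrals converge), which follows from the finiteness of $J$.
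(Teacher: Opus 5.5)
Your plan does not close, and the problem goes beyond orientation bookkeeping.

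\textbf{The hypothesis has been changed.} In the paper, $\bar G_{k|n:G}(v)=(n-k+1)(1-v)^k-(n-k)(1-v)^{k+1}$ is a function of the \emph{distribution} value $v=F(w)$. The assumption is that $\Phi(\bar G_{k|n:G}(v))/v$ decreases in $v$, and this is in fact automatic: $\bar G_{k|n:G}^2$ is positive and non-increasing, and $1/v$ is decreasing. In your survival-variable reading, $\psi(s)=s^{2k-1}[(n-k+1)-(n-k)s]^2$ satisfies $\psi(0^+)=0<1=\psi(1)$. So $\psi$ is never decreasing, and your version of the theorem is vacuous.

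\textbf{The hoped-for reorientation cannot work.} In your survival-weighted framework, $\bar F_X(\eta_X^{-1}(w))\le\bar F_Y(\eta_Y^{-1}(w))$ gives $(\eta_X^{-1})'\ge(\eta_Y^{-1})'$, hence $\eta_X^{-1}\ge\eta_Y^{-1}$ and $\mu_X\le\mu_Y$. For a decreasing $\psi$, the larger integrand therefore always sits on the shorter interval, whichever orientation of \eqref{eq2.4} you adopt; the two effects never point the same way. They align only for an increasing $\psi$, which is your fallback (your $\psi$ is indeed increasing when $2k>n$). With the literal $\eta_Z=\int_0^x\bar F_Z$, however, the hypothesis of \eqref{eq2.4} becomes the total-time-on-test comparison $\int_0^{F_Y^{-1}(p)}\bar F_Y\le\int_0^{F_X^{-1}(p)}\bar F_X$ for all $p$. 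That is a different order from $\le_{lir}$; for instance, it ranks $X$ and $cX$, $c>1$, the opposite way. So the fallback proves a different theorem.

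\textbf{The missing idea is the one the paper uses.} Its proof works with $\eta_Z(x)=\int_0^x F_Z(t)\,dt$, the integrated distribution function of the Landsberger--Meilijson characterization. This is what its derivative $1/F_Y(\eta_Y^{-1}(w))-1/F_X(\eta_X^{-1}(w))$ reflects; the $\bar F_Z$ in the displayed definition is a misprint, since the text calls it the CDF. The argument then runs as follows:
\begin{enumerate}
\item The lir order gives $F_X(w)\ge F_Y(\eta_Y^{-1}(\eta_X(w)))$.
\item Write $\Phi(\bar G_{k|n:G}(F_X(w)))=\frac{\Phi(\bar G_{k|n:G}(F_X(w)))}{F_X(w)}\,F_X(w)$ and apply the monotonicity hypothesis pointwise.
\item Substitute $v=\eta_Y^{-1}(\eta_X(w))$, for which $F_X(w)\,dw=F_Y(v)\,dv$. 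Because $\eta_Z(\infty)=\infty$, this maps $(0,\infty)$ onto itself, so there are no upper limits $\mu_X,\mu_Y$ and no leftover piece.
\end{enumerate}

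\textbf{The sign.} That chain gives $J(T^X_{k|n:G})\ge J(T^Y_{k|n:G})$, which is exactly how the paper's proof ends, opposite to the displayed statement. The ``$\ge$'' direction is the correct one. For $Y=cX$ with $c>1$, one has $\eta_Y^{-1}(w)=c\,\eta_X^{-1}(w/c)$, so $\frac{d}{dw}[\eta_Y^{-1}(w)-\eta_X^{-1}(w)]=1/F_X(\eta_X^{-1}(w/c))-1/F_X(\eta_X^{-1}(w))\ge 0$, i.e.\ $X\le_{lir}Y$. Yet $J(T^Y_{k|n:G})=c\,J(T^X_{k|n:G})<J(T^X_{k|n:G})$. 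Your suspicion about the sign was justified, but no choice of orientation will deliver ``$\le$'' for the lir order; the provable target is ``$\ge$''.
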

\begin{proof}
Recall that
\begin{align*}
    \bar G_{k|n:G}(v)
    =
    (n-k+1)(1-v)^k-(n-k)(1-v)^{k+1},
    \qquad 0<v<1.
\end{align*}
Hence, the SF  can be expressed as
$S^X_{k|n:G}(w)
    =
    \bar G_{k|n:G}(F_X(w)).$ Since $X\leq_{lir}Y$, relation \eqref{eq2.4} yields $\eta_Y^{-1}(w)-\eta_X^{-1}(w)$
is increasing in $w>0$. Differentiating both sides gives
\begin{align*}
    \frac{d}{dw}
    \left[
    \eta_Y^{-1}(w)-\eta_X^{-1}(w)
    \right]
    =
    \frac{1}{F_Y(\eta_Y^{-1}(w))}
    -
    \frac{1}{F_X(\eta_X^{-1}(w))}
    \geq 0,
\end{align*}
which implies
\begin{align}\label{eq:new1}
    F_X(w)
    \geq
    F_Y\!\left(
    \eta_Y^{-1}(\eta_X(w))
    \right),~ w>0.
\end{align}

Now, by the definition of CREx,
\begin{align}
    J(T^X_{k|n:G})
    &=
    -\frac12
    \int_0^\infty
    \left(
    S^X_{k|n:G}(w)
    \right)^2
    dw
    \nonumber\\
    &=-\frac12
    \int_0^\infty
    \Phi\!\left(
    \bar G_{k|n:G}(F_X(w))
    \right)
    dw
    \nonumber\\
    &=-\frac12
    \int_0^\infty
    \frac{
    \Phi\!\left(
    \bar G_{k|n:G}(F_X(w))
    \right)
    }{F_X(w)}
    F_X(w)\,dw.
    \label{eq:new2}
\end{align}
Since $\frac{\Phi(\bar G_{k|n:G}(s))}{s}$
is decreasing in $s$, together with \eqref{eq:new1}, we obtain
\begin{align}
    \frac{
    \Phi(\bar G_{k|n:G}(F_X(w)))
    }{F_X(w)}
    \leq
    \frac{
    \Phi\!\left(
    \bar G_{k|n:G}
    \left(
    F_Y(\eta_Y^{-1}(\eta_X(w)))
    \right)
    \right)
    }{
    F_Y(\eta_Y^{-1}(\eta_X(w)))
    }.
    \label{eq:new3}
\end{align}

Thus, from \eqref{eq:new3} and \eqref{eq:new2}, we have
\begin{align}
    J(T^X_{k|n:G})
    \geq -\frac12
    \int_0^\infty
    \frac{
    \Phi\!\left(
    \bar G_{k|n:G}
    \left(
    F_Y(\eta_Y^{-1}(\eta_X(w)))
    \right)
    \right)
    }{
    F_Y(\eta_Y^{-1}(\eta_X(w)))
    }
    F_X(w)\,dw.
    \label{eq:new4}
\end{align}
Now, perform the transformation $v=\eta_Y^{-1}(\eta_X(w)).$
Then, 
$\eta_Y(v)=\eta_X(w).$
Differentiating both sides yields, 
$dw
    =
    \frac{F_Y(v)}{F_X(w)}\,dv.$

Therefore, \eqref{eq:new4} becomes
\begin{align*}
    J(T^X_{k|n:G})
    &\geq -\frac12
    \int_{\eta_Y^{-1}(\eta_X(0))}^{\infty}
    \Phi\!\left(
    \bar G_{k|n:G}(F_Y(v))
    \right)\,dv
    \\
    &=
    -\frac12
    \int_0^\infty
    \left(
    S^Y_{k|n:G}(v)
    \right)^2
    dv=
    J(T^Y_{k|n:G}).
\end{align*}
This completes the proof.
\end{proof}
\begin{theorem}\label{th2.3}
    For $2k\geq n$, the following bounds hold for the CREx of $T_{k|n:G}$:
    \begin{align*}
        \mathfrak{b}_1 J(X)\leq J(T_{k|n:G})\leq \mathfrak b_2
J(X),    \end{align*}
where $\mathfrak b_1=\sup_{v\in(0,1)}\frac{\Phi\left(\bar G_{k|n:G}(v)\right)}{\Phi(1-v)}$ and $\mathfrak b_2=\inf_{v\in(0,1)}\frac{\Phi\left(\bar G_{k|n:G}(v)\right)}{\Phi(1-v)}$, provided that $\inf_v$ and $\sup_v$ exist.
\end{theorem}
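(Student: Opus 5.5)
The plan is to compare the two integrands pointwise after writing both CREx values over the same variable $v=F(w)$. Using the representation obtained earlier in this section,
\begin{align*}
J(T_{k|n:G})=-\frac12\int_0^1\frac{\Phi\left(\bar G_{k|n:G}(v)\right)}{f(F^{-1}(v))}\,dv,
\end{align*}
and applying the same probability integral transformation to \eqref{eq1.2} gives
\begin{align*}
J(X)=-\frac12\int_0^1\frac{\Phi(1-v)}{f(F^{-1}(v))}\,dv .
\end{align*}
Both integrands are nonnegative, and both CREx values are $\le 0$. This sign convention is what makes the $\sup$ give the lower bound and the $\inf$ give the upper bound.

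For every $v\in(0,1)$ I would write
\begin{align*}
\frac{\Phi\left(\bar G_{k|n:G}(v)\right)}{f(F^{-1}(v))}=\frac{\Phi\left(\bar G_{k|n:G}(v)\right)}{\Phi(1-v)}\cdot\frac{\Phi(1-v)}{f(F^{-1}(v))}.
\end{align*}
The first factor lies between $\mathfrak b_2$ and $\mathfrak b_1$, and the second factor is nonnegative. Hence
\begin{align*}
\mathfrak b_2\,\frac{\Phi(1-v)}{f(F^{-1}(v))}\le \frac{\Phi\left(\bar G_{k|n:G}(v)\right)}{f(F^{-1}(v))}\le \mathfrak b_1\,\frac{\Phi(1-v)}{f(F^{-1}(v))}.
\end{align*}
Integrating over $(0,1)$ and multiplying by $-\tfrac12$ reverses the inequalities. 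This yields $\mathfrak b_1 J(X)\le J(T_{k|n:G})\le \mathfrak b_2 J(X)$, which is the claim.

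I expect no serious obstacle; the only points needing care are the following.
\begin{itemize}
\item The sign reversal must be tracked correctly, since $J\le0$.
\item Finiteness of $\mathfrak b_1,\mathfrak b_2$, which the statement assumes, should be noted. Near $v=1$ the ratio behaves like $(n-k+1)^2(1-v)^{2k-2}$, and for $k\ge1$ it is bounded.
\item Finiteness of $J(X)$ is needed so that the integrals are meaningful.
\end{itemize}
Alternatively, the argument can be done directly in $w$, comparing $\bar F_{k|n:G}^2(w)$ with $\bar F^2(w)$ via \eqref{eq1.3} and \eqref{eq:2.1}. That route avoids any need for a density in the change of variables.
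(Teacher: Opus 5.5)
Your proposal is correct and takes the same approach as the paper's proof. Both factor the integrand of the quantile representation as $\frac{\Phi(\bar G_{k|n:G}(v))}{\Phi(1-v)}\cdot\frac{\Phi(1-v)}{f(F^{-1}(v))}$, bound the first factor by its supremum and infimum, integrate, and let the factor $-\tfrac12$ reverse the inequalities. Your sign bookkeeping is cleaner than the paper's: its displayed chain pulls out the supremum but labels the result $\mathfrak b_2 J(X)$, where $\mathfrak b_1 J(X)$ is meant.
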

\begin{proof}
   \begin{align*}
        J(T_{k|n:G})=-\frac12\int_0^1 \frac{\Phi\left(\bar G_{k|n:G}(v)\right)}{f(F^{-1}(v))}\,dv&=-\frac12\int_0^1 \frac{\Phi(\bar G_{k|n{:}G}(v))}{\Phi(1-v)}\frac{\Phi(1-v)}{f(F^{-1}(v))}\,dv\\
        &\geq -\frac12\sup_{v\in(0,1)}\frac{\Phi(\bar G_{k|n{:}G}(v))}{\Phi(1-v)}\int_0^1 \frac{\Phi(1-v)}{f(F^{-1}(v))}\,dv=\mathfrak b_2 J(X).
   \end{align*}
  Proceeding in the same manner, the upper bound is established.
\end{proof}
For the purpose of illustration, we present the following example pertaining to Theorem \ref{th2.3}.     
\begin{exe}
  Let $T_{k|10{:}G}$ characterizes the lifetime of a consecutive $\mathcal C(k|10{:}G)$ system  for $k=5,...,10$, and suppose that each component is distributed according to the Lomax distribution with CDF given by
    \begin{align*}
        F(w)=1-\left(1+\frac{w}{\lambda}\right)^{-\alpha},~w>0,~\beta>0,~\eta>0
    \end{align*}
    where $\beta$ and $\eta$ represent the shape and scale parameters, respectively. We choose $\beta=2$ and $\eta=3$ and plot $\mathfrak b_1\cdot J(X)$, $J(T_{k|n{:}G})$, and $\mathfrak b_2\cdot J(X)$ in Figure \ref{fig2}.
    \begin{figure}[H]
        \centering
        \includegraphics[width=14cm, height=7cm]{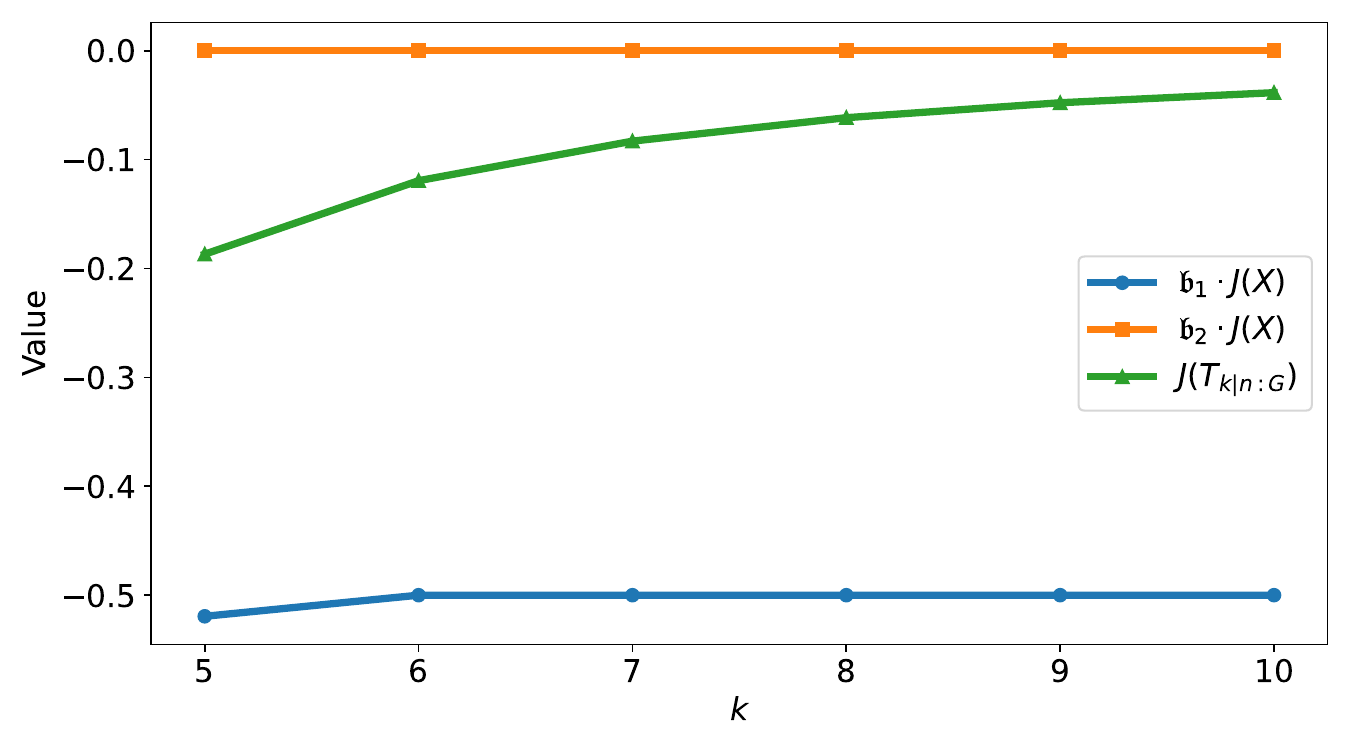}
        \caption{Graphical illustration of $\mathfrak b_1\cdot J(X)$, $J(T_{k|n{:}G})$, and $\mathfrak b_2\cdot J(X)$ for $k=5,6,...,10$.}
        \label{fig2}
    \end{figure}
\end{exe}
The following result establishes two new bounds for the CREx measure of consecutive systems.
\begin{theorem}
Assume that $X$ is an absolutely continuous RV with support set $\mathcal A$, and denote $\mathcal M_1=\inf_{w\in \mathcal A} f(w)$ and $\mathcal M_2=\sup_{w\in \mathcal A} f(w),$ where $f$ is PDF of $X$. Then, the CREx  satisfies
\begin{align*}
    \frac{J(V_{k|n:G})}{\mathcal M_2}
    \leq
    J(T_{k|n:G})
    \leq
    \frac{J(V_{k|n:G})}{\mathcal M_1},
\end{align*}
where
$J(V_{k|n:G})
    =
    -\frac12
    \int_0^1
    \Phi\!\left(
    \bar G_{k|n:G}(v)
    \right)\,dv$ and $V_{k|n:G}$ represents the lifetime of a  $\mathcal C(k|n{:}G)$ system, where the $n$ component lifetimes are iid uniform RVs on the interval $[0,1].$
\end{theorem}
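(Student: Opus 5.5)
The plan is to use the quantile representation of $J(T_{k|n:G})$ obtained at the start of this section,
\begin{align*}
J(T_{k|n:G})=-\frac12\int_0^1\frac{\Phi\left(\bar G_{k|n:G}(v)\right)}{f(F^{-1}(v))}\,dv ,
\end{align*}
and to compare the weight $1/f(F^{-1}(v))$ with the constants $1/\mathcal M_2$ and $1/\mathcal M_1$. This is the same device as in the proof of Theorem~\ref{th2.3}. There the ratio $\Phi(\bar G_{k|n:G}(v))/\Phi(1-v)$ was bounded; here the density factor is bounded instead. Throughout, $\Phi(\bar G_{k|n:G}(v))\ge 0$ on $(0,1)$, since $\bar G_{k|n:G}$ is the survival function of $V_{k|n:G}$ by \eqref{eq:2.1}.

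First step: for every $v\in(0,1)$ the point $F^{-1}(v)$ lies in the support $\mathcal A$. Hence $\mathcal M_1\le f(F^{-1}(v))\le \mathcal M_2$, and therefore
\begin{align*}
\frac{1}{\mathcal M_2}\le \frac{1}{f(F^{-1}(v))}\le \frac{1}{\mathcal M_1},\qquad 0<v<1 .
\end{align*}
This needs $\mathcal M_1>0$ for the right-hand bound to be finite. When $\mathcal M_1=0$, that side of the chain is read as trivial.

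Second step: multiply by the nonnegative function $\Phi(\bar G_{k|n:G}(v))$ and integrate over $(0,1)$. Then identify $\int_0^1\Phi(\bar G_{k|n:G}(v))\,dv=-2J(V_{k|n:G})$, using the definition of $J(V_{k|n:G})$ in the statement. This gives the two-sided bound
\begin{align*}
\frac{-2J(V_{k|n:G})}{\mathcal M_2}\le -2J(T_{k|n:G})\le \frac{-2J(V_{k|n:G})}{\mathcal M_1}.
\end{align*}

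The final step, and the one needing care, is converting this into the displayed chain. Both $J(T_{k|n:G})$ and $J(V_{k|n:G})$ are nonpositive. The previous display therefore reads
\begin{align*}
\frac{|J(V_{k|n:G})|}{\mathcal M_2}\le |J(T_{k|n:G})|\le \frac{|J(V_{k|n:G})|}{\mathcal M_1},
\end{align*}
which is the stated chain provided each $J$ in it is interpreted as the magnitude $-J\ge 0$.

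The obstacle is that the statement does not follow with the signed values $J\le 0$. Multiplying the bound on $-2J(T_{k|n:G})$ by $-1/2$ reverses every inequality. The signed conclusion is
\begin{align*}
\frac{J(V_{k|n:G})}{\mathcal M_1}\le J(T_{k|n:G})\le \frac{J(V_{k|n:G})}{\mathcal M_2},
\end{align*}
which is the displayed chain with $\mathcal M_1$ and $\mathcal M_2$ interchanged. So I can establish the statement exactly as worded only under the magnitude reading. For signed CREx values it holds with $\mathcal M_1$ and $\mathcal M_2$ swapped, as computed above.
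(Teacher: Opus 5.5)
Your argument is the paper's argument. Both insert $\mathcal M_1\le f(F^{-1}(v))\le\mathcal M_2$ into the quantile representation of $J(T_{k|n:G})$ and integrate against the nonnegative weight $\Phi(\bar G_{k|n:G}(v))$. Your sign diagnosis is also correct, and the paper's own displayed step confirms it: it concludes $J(T_{k|n:G})\le J(V_{k|n:G})/\mathcal M_2$, which is the signed chain with $\mathcal M_1$ and $\mathcal M_2$ interchanged, as you found. Since $J(V_{k|n:G})<0$, the chain as printed cannot hold whenever $\mathcal M_1<\mathcal M_2$, so only your swapped (or magnitude) version is provable.
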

\begin{proof}
    Since $\mathcal M_1\leq f(F^{-1}(v))\leq \mathcal M_2$, $0<v<1$. Using \eqref{eq2.3}, we get
    \begin{align*}
        J(T_{k|n:G})=-\frac{1}{2}\int_0^1 \frac{\Phi(\bar G_{k|n:G}(v))}{f(F^{-1}(v))}\,dv\leq -\frac{1}{2\mathcal M_2}\int_0^1 \Phi(\bar G_{k|n:G}(v))\,dv.
    \end{align*}
   An analogous argument yields the lower bound.
    \end{proof}
 \begin{theorem}\label{th2.5}
Assume that $T_{k|n:G}^{X}$ and $T_{k|n:G}^{Y}$ represent the lifetimes of two $\mathcal C(k|n{:}G)$ systems generated from absolutely continuous RVs $X$ and $Y$, having distribution functions $F_X$ and $F_Y$, and corresponding PDFs $f_X$ and $f_Y$, respectively. Assume that $X \leq_{\mathrm{disp}} Y.$
Then, $F_X$ and $F_Y$ differ only by a location parameter if and only if ${J}\!\left(T_{k|n:G}^{X}\right)
=
{J}\!\left(T_{k|n:G}^{Y}\right),$
for every pair $(k,n)$ satisfying $2k\ge n$.
\end{theorem}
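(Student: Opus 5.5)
The plan is to work entirely with the quantile representation
\[
J(T_{k|n:G}) = -\frac12\int_0^1 \frac{\Phi(\bar G_{k|n:G}(v))}{f(F^{-1}(v))}\,dv
\]
derived at the start of this section. Under $X\le_{disp}Y$ this representation turns the comparison of the two systems into a pointwise comparison of the density-quantile functions $f_X(F_X^{-1}(v))$ and $f_Y(F_Y^{-1}(v))$.

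For the ``only if'' direction, suppose $F_Y(w)=F_X(w-a)$ for some constant $a$. Then $F_Y^{-1}(v)=F_X^{-1}(v)+a$ and $f_Y(F_Y^{-1}(v))=f_X(F_X^{-1}(v))$ for all $v\in(0,1)$. Hence the two integrands coincide, and $J(T^X_{k|n:G})=J(T^Y_{k|n:G})$ for every admissible pair $(k,n)$. No order assumption is needed here.

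For the ``if'' direction, first use the recalled equivalence $X\le_{disp}Y \iff f_Y(F_Y^{-1}(v))\le f_X(F_X^{-1}(v))$ for $0<v<1$. This gives
\[
D(v):=\frac{1}{f_Y(F_Y^{-1}(v))}-\frac{1}{f_X(F_X^{-1}(v))}\ge 0, \qquad 0<v<1.
\]
Equality of the CREx values for a fixed pair $(k,n)$ reads
\[
\int_0^1 \Phi\!\left(\bar G_{k|n:G}(v)\right) D(v)\,dv=0 .
\]
For $0<v<1$ we have
\[
\bar G_{k|n:G}(v)=(1-v)^k\bigl(1+(n-k)v\bigr)>0,
\]
so the weight $\Phi(\bar G_{k|n:G}(v))=\bar G_{k|n:G}^2(v)$ is strictly positive on $(0,1)$. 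The integrand is therefore nonnegative with zero integral, which forces $D(v)=0$ for almost every $v$. A single pair, for instance $k=n$ (the series system, where $\bar G=(1-v)^n$), already suffices; the hypothesis ``for every pair'' is more than is needed.

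It remains to turn $D\equiv 0$ into a location shift. Since $\frac{d}{dv}F^{-1}(v)=1/f(F^{-1}(v))$, we get
\[
\frac{d}{dv}\bigl[F_Y^{-1}(v)-F_X^{-1}(v)\bigr]=D(v)=0 \quad \text{a.e.}
\]
Quantile functions of absolutely continuous distributions with positive densities on their supports are absolutely continuous on compact subintervals of $(0,1)$. So $F_Y^{-1}(v)-F_X^{-1}(v)=a$ is constant, i.e.\ $F_Y(w)=F_X(w-a)$, and the two distributions differ only by a location parameter.

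The main obstacle is this last regularity step: passing from ``$D=0$ a.e.'' to ``the difference of quantiles is constant'' requires absolute continuity of the quantile functions, and the integrals must be finite. Both should be stated as standing assumptions, namely that the CREx values are finite and that the densities are positive on interval supports. With those in place, the remaining steps are direct.
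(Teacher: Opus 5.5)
Your proposal is correct and follows the paper's proof essentially step for step. For necessity, the location shift makes the density-quantile functions coincide. For sufficiency, the dispersive-order characterization makes the integrand one-signed against the strictly positive weight $\Phi(\bar G_{k|n:G}(v))$, which forces $(F_X^{-1})'=(F_Y^{-1})'$ and hence a location shift. Your added remarks go slightly beyond the paper: one pair such as $k=n$ suffices, the conclusion holds only almost everywhere, and absolute continuity of the quantile functions is needed to integrate. These are legitimate refinements of points the paper passes over silently.
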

\begin{proof}
Assume that CDFs $F_X$ and $F_Y$ are members of a common location family. Then, for some $d\in\mathbb{R}$,
$
F_Y(w)=F_X(w-d).
$
Consequently,
$
f_Y(w)=f_X(w-d)
$
and
$
F_Y^{-1}(v)=F_X^{-1}(v)+d.
$
Hence,
$$
f_Y(F_Y^{-1}(v))
=
f_X(F_X^{-1}(v)).
$$
Using representation \eqref{eq2.3}, we obtain
$$
J(T_{k|n:G}^{Y})
=
-\frac12\int_0^1
\frac{\Phi(\overline G_{k|n:G}(v))}
{f_Y(F_Y^{-1}(v))}\,dv
=-\frac12
\int_0^1
\frac{\Phi(\overline G_{k|n:G}(v))}
{f_X(F_X^{-1}(v))}\,dv
=
{J}(T_{k|n:G}^{X}).
$$

Therefore,
$
{J}(T_{k|n:G}^{X})
=
{J}(T_{k|n:G}^{Y}).
$

Conversely, assume that 
$
{J}(T_{k|n:G}^{X})
=
{J}(T_{k|n:G}^{Y}).
$ Then, we get
$$
-\frac12\int_{0}^{1}
\Phi(\overline G_{k|n:G}(v))
\left[
\frac{1}{f_X(F_X^{-1}(v))}
-
\frac{1}{f_Y(F_Y^{-1}(v))}
\right]dv
=0.
$$
Since
$
\Phi(\overline G_{k|n:G}(v))>0
$
for $0<v<1$, and $X\leq_{\mathrm{disp}}Y$, it follows from the characterization of the dispersive order that
\begin{align*}
    f_X(F_X^{-1}(v))
\ge
f_Y(F_Y^{-1}(v))\implies \frac{1}{f_X(F_X^{-1}(v))}
-
\frac{1}{f_Y(F_Y^{-1}(v))}
\le 0.
\end{align*}
Therefore, the integrand is non-positive on $(0,1)$. Since its weighted integral is zero and $\Phi(\bar G_{k|n:G}(v))$ is positive, we must have
$$
\frac{1}{f_X(F_X^{-1}(v))}
=
\frac{1}{f_Y(F_Y^{-1}(v))}
$$
for every $u\in(0,1)$. Equivalently,
$$
f_X(F_X^{-1}(v))
=
f_Y(F_Y^{-1}(v)).
$$

Thus,
$
(F_X^{-1})'(v)
=
(F_Y^{-1})'(v),
$
which yields
$
F_X^{-1}(v)=F_Y^{-1}(v)+d
$
for some $d\in\mathbb{R}$. Hence, $F_X$ and $F_Y$ differ only by a location shift.
\end{proof}
\begin{theorem}
Suppose that the assumptions of Theorem \ref{th2.5} hold. Then, the CDFs $F_X$ and $F_Y$ belong to a common location--scale family
if and only if
\begin{equation*}
\frac{J(T_{k|n:G}^{X})}{J(Y)}
=
\frac{J(T_{k|n:G}^{Y})}{J(X)},
\end{equation*}
for all $k$ and $n$ satisfying $2k\ge n$, provided that $J(X),J(Y)<0$.
\end{theorem}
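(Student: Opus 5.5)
The identity in the statement is equivalent, after cross-multiplying (recall $J(X),J(Y)<0$), to
\begin{equation*}
J(T_{k|n:G}^{X})\,J(X)=J(T_{k|n:G}^{Y})\,J(Y)\qquad\text{for all } 2k\ge n .
\end{equation*}
I will work with this product form throughout. Both directions rest on the quantile representation $J(T_{k|n:G})=-\frac12\int_0^1 \Phi(\bar G_{k|n:G}(v))/f(F^{-1}(v))\,dv$. They also use its special case $J(X)=-\frac12\int_0^1 (1-v)^2/f(F^{-1}(v))\,dv$, which is the case $k=n$ with $\bar G$ replaced by $1-v$.

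\textbf{Converse direction.} Assume the product identity holds. Since $X\le_{disp}Y$, we have $f_X(F_X^{-1}(v))\ge f_Y(F_Y^{-1}(v))$ on $(0,1)$. Because $\Phi(\bar G_{k|n:G}(v))>0$ and $(1-v)^2>0$, integrating against these positive weights gives
\begin{equation*}
0<-J(T_{k|n:G}^{X})\le -J(T_{k|n:G}^{Y})\quad\text{and}\quad 0<-J(X)\le -J(Y).
\end{equation*}
A product of two nonnegative factors, each dominated by the corresponding factor of another product, equals that product only if both inequalities are equalities. Hence $J(T_{k|n:G}^{X})=J(T_{k|n:G}^{Y})$ for all admissible $(k,n)$. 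Theorem \ref{th2.5} then shows that $F_X$ and $F_Y$ differ by a location shift. This is a location--scale family with scale $\sigma=1$, which gives the ``if'' part.

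\textbf{Direct direction.} Write $F_Y(w)=F_X((w-d)/\sigma)$ with $\sigma>0$. Then $F_Y^{-1}(v)=\sigma F_X^{-1}(v)+d$ and $f_Y(F_Y^{-1}(v))=f_X(F_X^{-1}(v))/\sigma$. Substituting into the representation gives
\begin{equation*}
J(T_{k|n:G}^{Y})=\sigma J(T_{k|n:G}^{X})\quad\text{and}\quad J(Y)=\sigma J(X).
\end{equation*}
The product identity therefore becomes $J(T^X_{k|n:G})J(X)=\sigma^2 J(T^X_{k|n:G})J(X)$. Since both factors are nonzero, this is equivalent to $\sigma=1$.

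\textbf{Main obstacle.} The direct direction cannot hold for a general location--scale pair. I would prove it exactly when the scale parameter equals $1$, that is, for a pure location shift, and point out the gap openly. Under $X\le_{disp}Y$ one has $\sigma\ge1$, so any genuine scale change $\sigma>1$ is a counterexample to the identity as literally worded. The honest conclusion is this: under the stated hypotheses, the wording's identity characterizes exactly the location subfamily, i.e.\ location--scale with $\sigma=1$, and not the full location--scale family. A reading that would be true for every $\sigma$ is $J(T^X_{k|n:G})/J(X)=J(T^Y_{k|n:G})/J(Y)$, but that is not the identity stated here, and I would not substitute it.
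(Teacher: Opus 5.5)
Your analysis is correct, and the discrepancy you found is an error in the paper, not in your argument. The paper writes ``The necessity part is straightforward'' and proves only sufficiency. But, exactly as you compute, $F_Y^{-1}=\sigma F_X^{-1}+d$ gives $J(T^{Y}_{k|n:G})=\sigma J(T^{X}_{k|n:G})$ and $J(Y)=\sigma J(X)$, so the displayed identity holds if and only if $\sigma=1$.

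A concrete instance is a pure scale change with $d=0$, so nothing depends on how shifts interact with the $\int_0^\infty$ definition of $J$. Take $X\sim\mathrm{Exp}(1)$ and $Y=2X$, so that $X\le_{disp}Y$. Then $J(X)=-1/4$, $J(Y)=-1/2$, $J(T^{X}_{n|n:G})=-1/(4n)$ and $J(T^{Y}_{n|n:G})=-1/(2n)$. The two sides of the identity are therefore $1/(2n)$ and $2/n$. So the ``only if'' half of the statement (location--scale $\Rightarrow$ identity) is false. Under $X\le_{disp}Y$, the identity characterizes only the pure location subfamily, as you say.

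For the ``if'' half, your route differs from the paper's.
\begin{itemize}
\item The paper sets $c=J(Y)/J(X)\ge1$ and rewrites the hypothesis as $\int_0^1\Phi(\bar G_{k|n:G}(v))\,[1/(c f_X(F_X^{-1}(v)))-1/f_Y(F_Y^{-1}(v))]\,dv=0$. The bracket is nonpositive because $c f_X(F_X^{-1})\ge f_X(F_X^{-1})\ge f_Y(F_Y^{-1})$. It concludes pointwise that $(F_X^{-1})'=c\,(F_Y^{-1})'$, i.e.\ a location--scale relation.
\item You cross-multiply and use that $0<a\le A$, $0<b\le B$ and $ab=AB$ force $a=A$ and $b=B$. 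This gives $J(T^{X}_{k|n:G})=J(T^{Y}_{k|n:G})$ directly, and Theorem~\ref{th2.5} then yields a pure shift.
\end{itemize}
Your version is shorter, reuses the earlier characterization, and gives the sharp conclusion $\sigma=1$ explicitly. The paper's pointwise argument is self-contained but stops at the weaker ``location--scale''. In fact, its own relation $(F_X^{-1})'=c\,(F_Y^{-1})'$ with $c\ge1$, combined with $(F_X^{-1})'\le (F_Y^{-1})'$ from the dispersive order, already forces $c=1$. This agrees with your finding that no genuine scale change can satisfy the identity.

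One small wording slip: $J(X)$ is the case $k=n=1$ of the system representation, since then $\bar G_{1|1:G}(v)=1-v$. It is not ``$k=n$ with $\bar G$ replaced by $1-v$''.
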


\begin{proof}
The necessity part is straightforward, and hence, it is enough to establish the sufficiency part.

Assume that
\begin{equation}\label{eqA1}
\frac{J(T_{k|n:G}^{X})}{J(Y)}
=
\frac{J(T_{k|n:G}^{Y})}{J(X)}.
\end{equation}

Using the representation of extropy corresponding to $\Phi(v)=v^2$, we obtain
$$
\frac{J(T_{k|n:G}^{X})}{J(Y)}
=
-\frac12
\int_0^1
\frac{\Phi(\overline G_{k|n:G}(v))}
{J(Y)\,f_X(F_X^{-1}(v))}\,dv,
$$
and similarly,
$$
\frac{J(T_{k|n:G}^{Y})}{J(X)}
=
-\frac12
\int_0^1
\frac{\Phi(\overline G_{k|n:G}(v))}
{J(X)\,f_Y(F_Y^{-1}(v))}\,dv.
$$

Substituting these expressions into \eqref{eqA1} yields
$$
-\frac12
\int_0^1
\Phi(\overline G_{k|n:G}(v))
\left[
\frac{1}{J(Y)\,f_X(F_X^{-1}(v))}
-
\frac{1}{J(X)\,f_Y(F_Y^{-1}(v))}
\right]dv
=0.
$$

Now, let
$
c=\dfrac{J(Y)}{J(X)}.
$
Since $X\leq_{\mathrm{disp}}Y$, Equation \eqref{eq::2.4} implies
$
J(X)\ge J(Y),
$
and therefore $c\ge1$. Consequently, the above equation becomes
$$
-\frac{1}{2J(X)}
\int_0^1
\Phi(\overline G_{k|n:G}(v))
\left[
\frac{1}{cf_X(F_X^{-1}(v))}
-
\frac{1}{f_Y(F_Y^{-1}(v))}
\right]dv
=0.
$$

Since $X\leq_{disp}Y$, Equation \eqref{eq::2.4} further gives
$$
f_X(F_X^{-1}(v))
\ge
f_Y(F_Y^{-1}(v)),
\qquad 0<v<1.
$$
Since $c\ge1$, it follows that
$$
cf_X(F_X^{-1}(v))
\ge
f_Y(F_Y^{-1}(v)),
$$
and hence
$$
\frac1{cf_X(F_X^{-1}(v))}
-
\frac1{f_Y(F_Y^{-1}(v))}
\leq 0,
\qquad 0<v<1.
$$

Moreover,
$$
\Phi(\overline G_{k|n:G}(v))
=
\overline G_{k|n:G}^{\,2}(v)>0,
\qquad 0<v<1.
$$
Therefore, the integrand is non-positive on $(0,1)$. Since its integral vanishes, we necessarily have
$$
\frac{1}{cf_X(F_X^{-1}(v))}
=
\frac{1}{f_Y(F_Y^{-1}(v))}
\quad \text{for every } v\in(0,1).
$$

Using the identity $(F^{-1})'(v)=\frac1{f(F^{-1}(v))},$ we obtain
$$
(F_X^{-1})'(v)
=
c\,(F_Y^{-1})'(v).
$$
Integrating both sides with respect to $v$, it follows that
$$
F_X^{-1}(v)
=
c\,F_Y^{-1}(v)+d,
$$
for some constant $d\in\mathbb{R}$. Equivalently,
$
F_X(w)
=
F_Y\!\left(\frac{w-d}{c}\right),~ c>0,
$
which proves that $F_X$ and $F_Y$ belong to the same location--scale family.
\end{proof}
\section{Dynamic CREx for \texorpdfstring{$\mathcal{C}(k|n{:}G)$}~~\text{system}}
In reliability analysis, the uncertainty associated with the residual lifetime of a coherent system plays an important role in studying the ageing behavior and future performance of the system. Let \(T_{k|n:G}\) denote the lifetime of a $\mathcal C(k|n{:}G)$ system with common CDF \(F\), SF \(\bar F=1-F\), and PDF \(f\). Further, let \(T_{1:n},T_{2:n},\ldots,T_{n:n}\) denote the ordered component lifetimes.  

We consider the conditional RV $(T_{k|n:G}-s \mid T_{1:n}>s),$ which represents the residual lifetime of the $\mathcal C(k|n{:}G)$ system at time \(s\), given that all the components of the system are still functioning at time \(s\). Equivalently, the condition \(T_{1:n}>s\) implies that the earliest component failure has not yet occurred up to time \(s\). Thus, the above conditional variable describes the remaining lifetime of the system under the assumption that the system is completely operational at age \(s\). 

\cite{ery2010} obtained the mean residual life (MRL) of a $\mathcal C(k|n{:}G)$ system as
\begin{align*}
E(T_{k|n:G}-s\mid T_{1:n}>s)
&=
(n-k+1)
\int_0^\infty
\left(
\frac{\bar F(w+s)}{\bar F(s)}
\right)^k
dw
-(n-k)
\int_0^\infty
\left(
\frac{\bar F(w+s)}{\bar F(s)}
\right)^{k+1}
dw,
\end{align*}
for $2k\ge n$.

Motivated by the above MRL expression for $\mathcal C(k|n{:}G)$, in the following, we provide an explicit representation for the DCREx of the residual lifetime of the $\mathcal C(k|n{:}G)$ system.
\begin{theorem}
Let $T_{k|n:G}$ denote the lifetime of the $\mathcal C(k|n{:}G)$ system with SF $\bar F$, and assume that $2k\ge n$. Then, the DCREx is given by
\begin{align*}
J(Y_s)
=
-\frac{\bar F(s)}{2}
\int_0^1
\frac{
\left[
(n-k+1)u^k-(n-k)u^{k+1}
\right]^2
}{
f(F^{-1}(1-u\bar F(s)))
}
\,du,
\end{align*}
where $Y_s=(T_{k|n:G}-s\mid T_{1:n}>s).$
\end{theorem}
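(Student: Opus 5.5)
The plan is to first identify the survival function of $Y_s$ and then apply the definition of CREx with a probability integral substitution, exactly as was done for $J(T_{k|n:G})$ earlier in this section.

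\textbf{Step 1: the survival function of $Y_s$.} Conditional on $T_{1:n}>s$, all $n$ components are working at age $s$. By independence, their residual lifetimes $X_i-s$ are iid with survival function $\bar F(w+s)/\bar F(s)$. The system is then a fresh $\mathcal C(k|n{:}G)$ system built from these residual components, so \eqref{eq1.3} applies with $\bar F$ replaced by $\bar F_s(w)=\bar F(w+s)/\bar F(s)$. This gives
\begin{align*}
P(Y_s>w)=(n-k+1)\bar F_s^{\,k}(w)-(n-k)\bar F_s^{\,k+1}(w), \qquad w>0,
\end{align*}
valid for $2k\ge n$. This matches the MRL formula of \cite{ery2010} quoted above, since integrating it over $w$ returns that expression.

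\textbf{Step 2: definition of CREx.} From \eqref{eq1.2},
\begin{align*}
J(Y_s)=-\frac12\int_0^\infty \left[(n-k+1)\bar F_s^{\,k}(w)-(n-k)\bar F_s^{\,k+1}(w)\right]^2dw .
\end{align*}

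\textbf{Step 3: change of variables.} Substitute $u=\bar F(w+s)/\bar F(s)$. As $w$ runs over $(0,\infty)$, $u$ decreases from $1$ to $0$. Moreover $du=-f(w+s)\,dw/\bar F(s)$, and $w+s=F^{-1}(1-u\bar F(s))$. Hence
\begin{align*}
dw=-\frac{\bar F(s)\,du}{f(F^{-1}(1-u\bar F(s)))}.
\end{align*}
Reversing the limits absorbs the minus sign, and the integrand becomes $[(n-k+1)u^k-(n-k)u^{k+1}]^2$. This yields the stated formula with the factor $\bar F(s)/2$ in front.

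\textbf{Main obstacle.} The only delicate point is justifying Step 1: that conditioning on $T_{1:n}>s$ turns the system into a new consecutive system with iid residual lifetimes. This follows from independence, because the event $\{T_{1:n}>s\}$ factorizes as $\bigcap_i\{X_i>s\}$. Everything after that is routine calculus. The only technical care is assuming $f>0$ on the support, so that $F^{-1}$ is well defined and the substitution is legitimate.
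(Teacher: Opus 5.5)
Your proposal is correct and follows essentially the same route as the paper: you identify the survival function of $Y_s$ as $(n-k+1)\bar F_s^{\,k}-(n-k)\bar F_s^{\,k+1}$ with $\bar F_s(w)=\bar F(s+w)/\bar F(s)$, insert it into the CREx definition, and substitute $u=\bar F(s+w)/\bar F(s)$ with $dw=-\bar F(s)\,du/f(F^{-1}(1-u\bar F(s)))$. The only difference is that the paper cites Corollary~1 of \cite{ery2010} for the conditional survival function, whereas you derive it directly from the independence of the components; that derivation also uses the fact that the system lifetime, being a maximum of minima over path sets, shifts by $s$ when every component lifetime does.
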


\begin{proof}
For $2k\ge n$, similar to \cite{ery2010} (Corollary 1), the SF $\bar F_{Y_s}$ of $Y_s=(T_{k|n:G}-s\mid T_{1:n}>s)$ is defined as
\begin{align*}
\bar F_{Y_s}(w)
=
(n-k+1)
\left(
\frac{\bar F(s+w)}{\bar F(s)}
\right)^k
-
(n-k)
\left(
\frac{\bar F(s+w)}{\bar F(s)}
\right)^{k+1}.
\end{align*}
By the definition of CREx,
\begin{align*}
J(Y_s)
&=
-\frac12
\int_0^\infty
\bar F_{Y_s}^2(w)\,dw\\
&=
-\frac12
\int_0^\infty
\Bigg[
(n-k+1)
\left(
\frac{\bar F(s+w)}{\bar F(s)}
\right)^k
-(n-k)
\left(
\frac{\bar F(s+w)}{\bar F(s)}
\right)^{k+1}
\Bigg]^2
dw.
\end{align*}
Now use the transformation
$
x=\dfrac{\bar F(s+w)}{\bar F(s)}.$ Then
$
dx=-\dfrac{f(s+w)}{\bar F(s)}\,dw,
$
so that

$
dw=
-\dfrac{\bar F(s)}
{f(F^{-1}(1-x\bar F(s)))}
\,dx.$ Further, when $w=0$, we have $x=1$, and as $w\to\infty$, $x\to0$. Therefore,
\begin{align*}
J(Y_s)
=
-\frac{\bar F(s)}{2}
\int_0^1
\frac{
\left[
(n-k+1)x^k-(n-k)x^{k+1}
\right]^2
}{
f(F^{-1}(1-x\bar F(s)))
}
\,dx.
\end{align*}
This completes the proof.
\end{proof}
\begin{exe}
Consider the lifetime $Y_s=(T_{k|n:G}-s \mid T_{1:n}>s),$
where \(T_{k|n:G}\) denotes the lifetime of a $\mathcal C(k|n{:}G)$ system and each component lifetime follows the exponential distribution with CDF $F(w)=1-e^{-\gamma w},~ w>0,\ \gamma>0.$
Since the analytical expression of DCREx for \(Y_t\) is difficult to study explicitly, we present a numerical illustration for \(n=10\) and \(k=5,\ldots,10\). The corresponding plots are displayed in Figure~\ref{fig3}.
\begin{figure}[H]
    \centering
    \includegraphics[width=15cm, height=7.5cm]{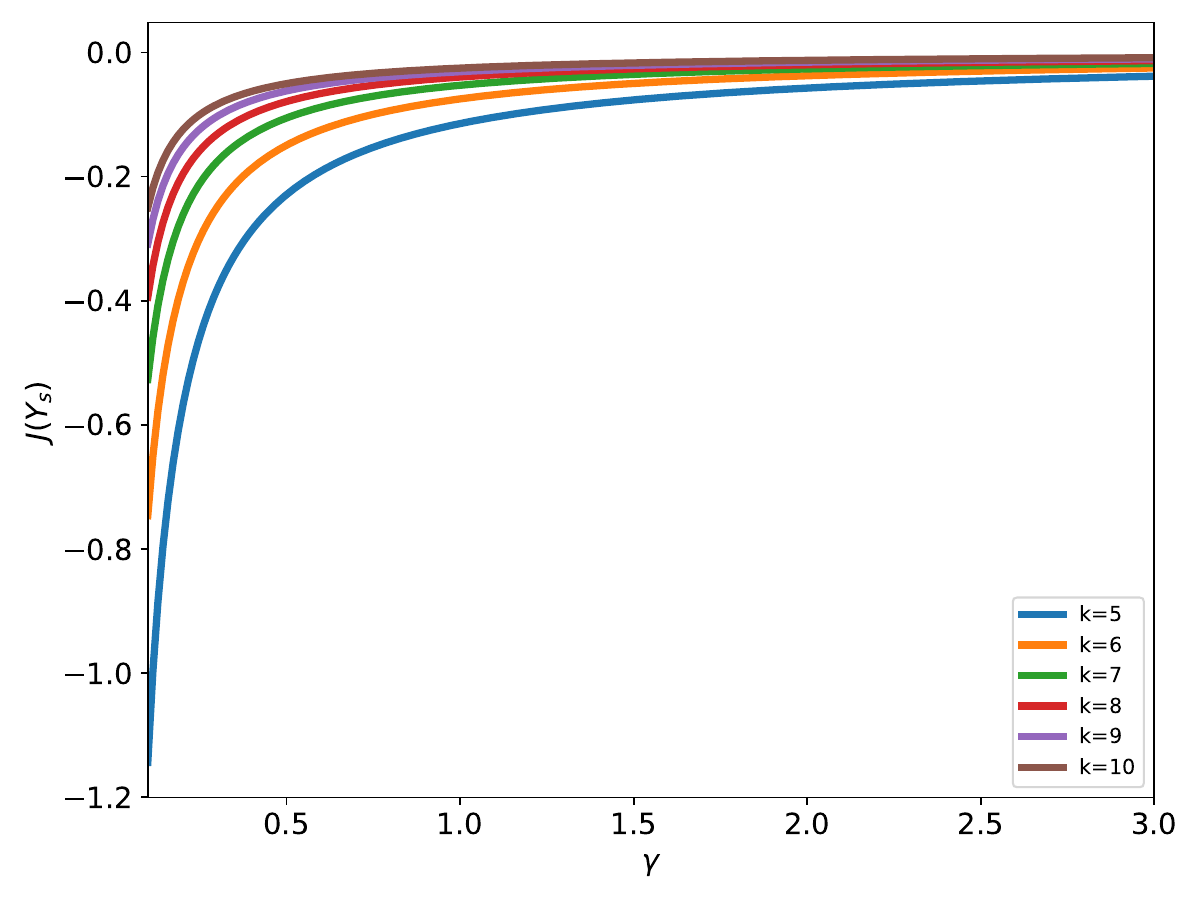}
    \caption{The plot of  DCREx
for a $\mathcal C(k|10{:}G)$ system when component lifetimes follow the exponential distribution with parameter \(\gamma\in(0,3)\), for \(k=5,\ldots,10\).}
    \label{fig3}
\end{figure}
It is evident from Figure~\ref{fig3} that, as the parameter \(\gamma\) increases, the DCREx of the system increases and approaches to zero. This behavior indicates that larger values of \(\gamma\) reduce the uncertainty associated with the residual lifetime of the system. In reliability interpretation, values of DCREx closer to zero correspond to lower uncertainty and greater predictability in system performance and reliability.
\end{exe}

In the following theorem, we establish upper and lower bounds for DCREx measure.
\begin{theorem}
For $2k\geq n$, the DCREx of $(T_{k|n:G}-x|T_{1:n}>s)$ is bounded as follows:
\begin{align*}
    \mathcal M_1\cdot J(s)\leq J(T^s_{k|n:G})\leq \mathcal M_2\cdot J(s),
\end{align*}
where $\sup_{u\in(0,1)}\frac{\Phi(R_{k|n:G}(u))
}{\Phi(u)}=\mathcal M_2$,  $\inf_{u\in(0,1)}\frac{\Phi(R_{k|n:G}(u))
}{\Phi(u)}=\mathcal M_1$, and $\Phi(w)=-\frac{1}{2}w^2$.
\end{theorem}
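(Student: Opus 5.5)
The plan is to imitate the proof of Theorem~\ref{th2.3}, now applied to the integral representation of DCREx from the preceding theorem. I read $J(T^s_{k|n:G})$ as $J(Y_s)$ with $Y_s=(T_{k|n:G}-s\mid T_{1:n}>s)$. I take $R_{k|n:G}(u)=(n-k+1)u^k-(n-k)u^{k+1}$, the system reliability polynomial evaluated at the conditional component survival $u=\bar F(s+w)/\bar F(s)$. I take $J(s)$ to be the DCREx of a single component's residual life $X_s=(X-s\mid X>s)$, namely
\begin{align*}
J(s)=-\frac12\int_0^\infty\left(\frac{\bar F(s+w)}{\bar F(s)}\right)^2dw .
\end{align*}

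First I rewrite $J(s)$ with the same substitution $x=\bar F(s+w)/\bar F(s)$ used in the preceding proof. This gives
\begin{align*}
J(s)=-\frac{\bar F(s)}{2}\int_0^1\frac{u^2}{f(F^{-1}(1-u\bar F(s)))}\,du .
\end{align*}
Next I take the representation of the preceding theorem and multiply and divide the integrand by $u^2$:
\begin{align*}
J(T^s_{k|n:G})=-\frac{\bar F(s)}{2}\int_0^1\frac{R^2_{k|n:G}(u)}{u^2}\cdot\frac{u^2}{f(F^{-1}(1-u\bar F(s)))}\,du .
\end{align*}
The constant $-\tfrac12$ cancels in $\Phi(R_{k|n:G}(u))/\Phi(u)$, so this ratio equals $R^2_{k|n:G}(u)/u^2$. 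By definition it lies in $[\mathcal M_1,\mathcal M_2]$ for every $u\in(0,1)$. The weight $\bar F(s)\,u^2/f(F^{-1}(1-u\bar F(s)))$ is nonnegative.

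I then bound the ratio by $\mathcal M_2$ and by $\mathcal M_1$ inside the integral and pull the constant out. What remains is exactly the integral representation of $J(s)$, which yields the two bounds $\mathcal M_1 J(s)$ and $\mathcal M_2 J(s)$.

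The one delicate step is the direction of the inequalities, which is where I expect the real care to be needed. Since $J(s)\le 0$, replacing the ratio by its supremum $\mathcal M_2$ makes the negative integral smaller. So the direct computation gives
\begin{align*}
\mathcal M_2\, J(s)\le J(T^s_{k|n:G})\le \mathcal M_1\, J(s).
\end{align*}
This coincides with the displayed statement only under the convention, also used in Theorem~\ref{th2.3}, that the roles of $\sup$ and $\inf$ are interchanged to account for the negative sign of the measure. In writing the proof I would state the inequalities in this sign-correct form and note how the labels must be read.

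I would also check that $\mathcal M_1,\mathcal M_2$ are finite. $R_{k|n:G}(u)/u$ is a polynomial, since $k\ge1$, so $\mathcal M_2<\infty$, and $\mathcal M_1\ge0$ trivially. For $k\ge2$ we get $\mathcal M_1=0$, since $R_{k|n:G}(u)/u\to0$ as $u\to0$. In that case the bound involving $\mathcal M_1$ reduces to $J(T^s_{k|n:G})\le 0$. Finally, the integrals must converge, i.e.\ $J(s)>-\infty$, which I would assume as in the earlier theorems.
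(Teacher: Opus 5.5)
Your argument is the paper's argument. The paper sets $d\mu_s(u)=\bar F(s)\,\Phi(u)\,du/f(F^{-1}(1-u\bar F(s)))$, writes $J(T^s_{k|n:G})=\int_0^1\frac{\Phi(R_{k|n:G}(u))}{\Phi(u)}\,d\mu_s(u)$, bounds the ratio between $\mathcal M_1$ and $\mathcal M_2$, and uses $\int_0^1 d\mu_s=J(s)$. That is exactly your multiply-and-divide by $u^2$.

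Your sign analysis is correct, and it is exactly the step where the paper goes wrong. Since $\Phi(u)=-\tfrac12u^2<0$, $d\mu_s$ is a nonpositive measure, yet the paper integrates $\mathcal M_1\le\Phi(R_{k|n:G}(u))/\Phi(u)\le\mathcal M_2$ against it as if it were positive. The valid conclusion is your $\mathcal M_2J(s)\le J(T^s_{k|n:G})\le\mathcal M_1J(s)$.

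Do not present this as a reading convention, though. With $\mathcal M_2$ explicitly defined as the supremum, the displayed statement is simply false. For $k=n=2$ one has $R_{2|2:G}(u)=u^2$, $\mathcal M_1=0$ and $\mathcal M_2=1$, so the display would assert $0\le J(T^s_{2|2:G})$. But $J(T^s_{2|2:G})=-\tfrac12\int_0^\infty(\bar F(s+w)/\bar F(s))^4\,dw<0$.

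Theorem~\ref{th2.3} is also not a precedent for swapping. There the supremum is named $\mathfrak b_1$ and already sits on the lower side, so that statement is correct as printed. So state your corrected inequality as a correction of the theorem, not as an alternative reading of it.
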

\begin{proof}
Let
\[
d\mu_s(u)
=
\bar F(s)\,
\frac{\Phi(u)}
     {f(F^{-1}(1-u\bar F(s)))}
\,du.
\]
Then
\[
J_{k|n:G}(s)
=
\int_0^1
\frac{\Phi(R_{k|n:G}(u))}
     {\Phi(u)}
\,d\mu_s(u).
\]

Since
\[
\mathcal M_1
\le
\frac{\Phi(R_{k|n:G}(u))}
     {\Phi(u)}
\le
\mathcal M_2,
\qquad 0<u<1,
\]
we obtain
\[
\mathcal M_1
\int_0^1 d\mu_s(u)
\le
J_{k|n:G}(s)
\le
\mathcal M_2
\int_0^1 d\mu_s(u).
\]
Noting that $\int_0^1 d\mu_s(u)
=
J(s),$
it follows that
\[
\mathcal M_1\,J(s)
\le
J_{k|n:G}(s)
\le
\mathcal M_2\,J(s).
\]
This completes the proof.
\end{proof}
\section{Nonparametric estimation}
In this section, we develop a distribution--free estimator for CREx associated with consecutive $\mathcal{C}(k|n:G)$ systems. Let $X_1,X_2,\ldots,X_N$
be a random sample of size $N$ drawn from an absolutely continuous non--negative distribution function $F$. Denote the corresponding order statistics by $X_{1:N}\leq X_{2:N}\leq \cdots \leq X_{N:N}.$
It is important to emphasize that these observations are external sample lifetimes and are not the actual component lifetimes constituting the considered  $\mathcal{C}(k|n:G)$ system. The observed sample is used to construct an empirical approximation of the underlying reliability structure, which in turn serves as the basis for estimating the CREx.

Using representation \eqref{eq:2.1}, the CREx of $T_{k|n:G}$ can be expressed as
$$
J(T_{k|n:G})
=-\frac12
\int_0^1
\frac{\Phi\!\left(\overline G_{k|n:G}(v)\right)}
{f(F^{-1}(v))}
\,dv
=-\frac12
\int_0^1
\Phi\!\left(\overline G_{k|n:G}(v)\right)
\left[\frac{dF^{-1}(v)}{dv}\right]
dv,~~2k\geq n.
$$

Motivated by the above formulation, we estimate
$J(T_{k|n:G})$
through a nonparametric approximation of the derivative of the quantile function. Following the methodology introduced by \cite{vas1976}, the derivative $\frac{dF^{-1}(v)}{dv}$
is approximated by a local finite--difference slope computed from adjacent order statistics. Specifically,
$$
\frac{dF^{-1}(v)}{dv}
\approx
\frac{N\bigl(X_{i+m:N}-X_{i-m:N}\bigr)}{2m},
$$
where the boundary conventions
$X_{i:N}=X_{1:N}, \quad i<1,$
and $X_{i:N}=X_{N:N}, \quad i>N,$
are adopted. Here, $m$ denotes a positive integer window parameter satisfying $m\leq \frac{N}{2}.$

Consequently, an empirical estimator for
$J(T_{k|n:G})$
is obtained as follows:
\begin{align}\label{eq3.1}
    \hat J_1(T_{k|n:G})&=-\frac{1}{2N}\sum_{i=1}^N\Phi\left(\bar G_{k|n:G}\left(\frac{i}{N+1}\right)\right)\left(\frac{N\bigl(X_{i+m:N}-X_{i-m:N}\bigr)}{2m}\right)\nonumber\\
    &=-\frac{1}{2N}\sum_{i=1}^N\Phi\left((n-k+1)\left(1-\frac{i}{N+1}\right)^k-(n-k)\left(1-\frac{i}{N+1}\right)^{k+1}\right)\nonumber\\&~~~~~~~~~~~\times\left(\frac{N\bigl(X_{i+m:N}-X_{i-m:N}\bigr)}{2m}\right)
\end{align}.

Herein, we prove the consistency of the proposed estimator.
\begin{theorem}
Assume $T_{k|n:G}$ represents the lifetime of a  $\mathcal C(k|n{:}G)$ system. For fixed integers $k$ and $n$, the estimator $\widehat J(T_{k|n:G})$
satisfies $\widehat J(T_{k|n:G})
\xrightarrow{\text{a.s.}} J(T_{k|n:G})~ \text{as}~ N\to\infty,$
provided that $m\to\infty
~\text{and}~
\frac{m}{N}\to0.$
Hence, $\widehat{J}(T_{k|n:G})$ is a strongly consistent estimator of
$J(T_{k|n:G})$.
\end{theorem}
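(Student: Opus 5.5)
The plan is to rewrite the estimator \eqref{eq3.1} as a linear combination of spacings. Then split it into a main L-statistic part and a remainder, and handle each with classical strong limit theorems for order statistics.

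\textbf{Step 1 (summation by parts).} Write $w(v)=\Phi(\bar G_{k|n:G}(v))$ and $w_i=w(i/(N+1))$. Expand the differences $X_{i+m:N}-X_{i-m:N}$ and collect the coefficient of each $X_{j:N}$; the boundary conventions give extra mass at $j=1$ and $j=N$. This yields
\[
\widehat J(T_{k|n:G})=-\frac{1}{2}\sum_{j=1}^N c_{j,N}X_{j:N},\qquad c_{j,N}=\frac{1}{2m}\bigl(w_{j-m}-w_{j+m}\bigr)\ \text{(interior $j$)} ,
\]
with suitable modifications near the ends. Since $w$ is a polynomial, hence Lipschitz on $[0,1]$, the mean value theorem gives $c_{j,N}=-\frac1N w'(j/(N+1))+O(m/N^2)$ uniformly for $m<j\le N-m$. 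Here $-w'(v)=2\bar G_{k|n:G}(v)g_{k|n:G}(v)\ge0$, with $g_{k|n:G}=-\bar G_{k|n:G}'$.

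\textbf{Step 2 (main term).} The interior part is $\frac1N\sum_j J_0(j/(N+1))X_{j:N}$ with the smooth bounded weight $J_0=-w'$. By van Zwet's or Wellner's strong law for L-statistics (it suffices that $E X<\infty$, which is implicit in $J(X)$ and $J(T_{k|n:G})$ being finite, together with bounded continuous $J_0$), this converges a.s. to $\int_0^1 J_0(v)F^{-1}(v)\,dv$. Integrating by parts, using $w(1)=0$ and $w(0)=1$, gives
\[
\int_0^1 -w'(v)F^{-1}(v)\,dv=\int_0^1 w(v)\,dF^{-1}(v)=\int_0^\infty \bar F_{k|n:G}^2(x)\,dx .
\]
The boundary term at $1$ vanishes because $w(v)=O((1-v)^{2k})$ and $(1-v)F^{-1}(v)\to0$ when $E X<\infty$. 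With the factor $-\tfrac12$ this is exactly $J(T_{k|n:G})$.

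\textbf{Step 3 (remainders).} I expect this to be the main obstacle. There are three pieces to control. First, the $O(m/N^2)$ error times $\sum_j X_{j:N}=N\bar X_N$ is $O(m/N)\bar X_N\to0$ a.s. Second, at the lower boundary ($j\le m$) the weights are $O(1/m)$ at $j=1$ and bounded by $O(1/N)$ otherwise. The contribution is then roughly $\frac{1}{2m}\sum_{i\le m}(w_i-\cdots)X_{1:N}$ plus small terms, which is bounded by $C X_{m:N}\cdot O(m/N)$ and tends to $0$ since $X_{m:N}\to F^{-1}(0^+)$ a.s. when $m/N\to0$. Third, at the upper boundary ($j>N-m$) the weights involve $w_i$ with $i/(N+1)$ near $1$, where $w_i\le C(m/N)^{2k}$. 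The term $\frac{1}{2m}\sum_{i>N-m} w_i X_{N:N}$ is therefore at most $C(m/N)^{2k}X_{N:N}$. This is where care is needed: I would bound $X_{N:N}\le \sum_j X_j=N\bar X_N$, which needs $(m/N)^{2k}N\to0$. If that fails, I would instead use $X_{N:N}/N\to0$ a.s. (true under $EX<\infty$), so the term is $o(N(m/N)^{2k})$. Adding a mild rate on $m$ if necessary settles this piece; for $k\ge1$ and the interior-type weights the same argument applies more easily.

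Combining the three steps gives almost sure convergence. The condition $m\to\infty$ is used only to make the finite-difference weights approximate $w'$, and $m/N\to0$ is used to kill the boundary terms.
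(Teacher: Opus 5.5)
Your route is genuinely different from the paper's, and more rigorous. The paper replaces each scaled spacing $N(X_{i+m:N}-X_{i-m:N})/(2m)$ by $1/f(X_{i:N})$, replaces $i/(N+1)$ by $F(X_{i:N})$, and applies the ordinary strong law to $\frac1N\sum_i\Phi(\bar G_{k|n:G}(F(X_i)))/f(X_i)$. It bounds no error, and it never isolates the $m$ truncated indices at each end, where the spacing approximation is false. Your approach is the right way to make this rigorous: summation by parts into an L-statistic with weight $J_0=-w'$ plus boundary blocks, with the main term handled by van Zwet's or Wellner's strong law. It also shows which hypotheses are really needed. Three points in your execution must be corrected, however.

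\textbf{Lower block and the tail assumption.} First, the lower block is miscomputed. No $X_{j:N}$ with $j\le m$ gets a positive coefficient. One has $c_{1,N}=-\frac{1}{2m}\sum_{i=1}^{m+1}w_i\to-\frac12$ and $c_{j,N}=-w_{j+m}/(2m)\approx-\frac{1}{2m}$ for $2\le j\le m$. So the block has total mass $-\frac{1}{2m}\sum_{i=1}^{2m}w_i\to-1$, not $O(m/N)$. It therefore lies in $[-X_{m:N},0]$ and tends to $-F^{-1}(0^+)$.

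This exactly cancels the boundary term $w(0)F^{-1}(0^+)$ that you dropped when integrating $\int_0^1-w'F^{-1}$ by parts. Your final limit is right only when $F^{-1}(0^+)=0$, and this must be assumed explicitly; the paper's quantile representation of $J(T_{k|n:G})$ uses it tacitly. It cannot be avoided: each $X_{i+m:N}-X_{i-m:N}$ is shift-invariant, while $-\frac12\int_0^\infty\bar F_{k|n:G}^2$ is not.

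Second, $EX<\infty$ is not implied by finiteness of $J(X)$ or $J(T_{k|n:G})$. A Lomax law with shape $\beta\in(1/2,1]$ has $\int_0^\infty\bar F^2<\infty$ but infinite mean. So $EX<\infty$ must be stated as a hypothesis.

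\textbf{Upper end.} Third, the rate on $m$ is not optional: with only $m\to\infty$ and $m/N\to0$, the theorem as stated is false. Every summand $w_i(X_{i+m:N}-X_{i-m:N})$ is nonnegative, and $X_{i+m:N}=X_{N:N}$ for $i>N-m$. Hence
\[
-\widehat J(T_{k|n:G})\ \ge\ \frac{1}{4m}\Bigl(\sum_{i=N-m+1}^{N}w_i\Bigr)\bigl(X_{N:N}-X_{N-m:N}\bigr),
\]
and the prefactor is of exact order $(m/N)^{2k}$. For standard exponential components, $X_{N:N}-X_{N-m:N}$ has the law of the maximum of $m$ i.i.d. standard exponentials, so it is of order $\log m$. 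With $m=\lfloor N/\log\log N\rfloor$ the lower bound is of order $\log N/(\log\log N)^{2k}\to\infty$ in probability. Thus $\widehat J(T_{k|n:G})\to-\infty$, and the paper's proof misses this because it never examines the truncated terms.

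Your argument does yield strong consistency once you add the hypotheses $F^{-1}(0^+)=0$, $EX<\infty$ and $N(m/N)^{2k}=O(1)$. The last of these is met by the choice $m=\lfloor\sqrt N\rfloor$ used in the simulations. You also need the routine check that the end terms $j\le m$ and $j>N-m$ of $\frac1N\sum_j J_0(j/(N+1))X_{j:N}$ vanish. Some rate restriction of this kind must be part of the theorem itself.
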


\begin{proof}
To prove the result, we employ the asymptotic argument used in \cite{nou2011}. Under the conditions $m\to\infty
~\text{and}~
\frac{m}{N}\to0,$
the finite--difference quantity associated with the order statistics provides an approximation to PDF. In particular,
\begin{align*}\
    \frac{2m}
{N\bigl(X_{i+m:N}-X_{i-m:N}\bigr)}
&=
\frac{
F_N(X_{i+m:N})-F_N(X_{i-m:N})
}
{
X_{i+m:N}-X_{i-m:N}
}\nonumber\\
&\approx
\frac{
F(X_{i+m:N})-F(X_{i-m:N})
}
{
X_{i+m:N}-X_{i-m:N}
}\\
&\approx\frac{f_{X_{i+m:N}+f_{X_{i-m:N}}}}{2}\approx f(X_{i:N}).
\end{align*} 
Now, using the identity $F_N(X_{i:N})=\frac{i}{N+1},$
the proposed estimator can be written as
$$
\widehat{ J}(T_{k|n:G})
=-\frac12\frac1N
\sum_{i=1}^N
\Phi\!\left(
\overline G_{k|n:G}\!\left(\frac{i}{N+1}\right)
\right)
\left(
\frac{
N\bigl(X_{i+m:N}-X_{i-m:N}\bigr)
}
{2m}
\right).
$$

Using the previous approximation,
$$
\frac{
N\bigl(X_{i+m:N}-X_{i-m:N}\bigr)
}
{2m}
\approx
\frac1{f(X_{i:N})},
$$
which gives
\begin{align*}
    \widehat{J}(T_{k|n:G})
\approx-\frac12
\frac1N
\sum_{i=1}^N
\frac{
\Phi\!\left(
\overline G_{k|n:G}\!\left(
F_N(X_{i:N})
\right)
\right)
}
{f(X_{i:N})}.
\end{align*}
Since $F_N(X_{i:N})
\xrightarrow{\text{a.s.}}
F(X_{i:N}),$
and $\overline G_{k|n:G}(\cdot)$ together with $\Phi(\cdot)$ are continuous, we further obtain
$$
\widehat{J}(T_{k|n:G})
\approx-\frac12
\frac1N
\sum_{i=1}^N
\frac{
\Phi\!\left(
\overline G_{k|n:G}\!\left(
F(X_{i:N})
\right)
\right)
}
{f(X_{i:N})}.
$$
Because the ordered observations are merely a permutation of the original sample,
$$
\frac1N
\sum_{i=1}^N
\frac{
\Phi\!\left(
\overline G_{k|n:G}\!\left(
F(X_{i:N})
\right)
\right)
}
{f(X_{i:N})}
=
\frac1N
\sum_{i=1}^N
\frac{
\Phi\!\left(
\overline G_{k|n:G}\!\left(
F(X_i)
\right)
\right)
}
{f(X_i)}.
$$

Finally, by the Strong Law of Large Numbers,
$$
\frac1N
\sum_{i=1}^N
\frac{
\Phi\!\left(
\overline G_{k|n:G}\!\left(
F(X_i)
\right)
\right)
}
{f(X_i)}
\xrightarrow{\text{a.s.}}
\mathbb E\!\left[
\frac{
\Phi\!\left(
\overline G_{k|n:G}(F(X))
\right)
}
{f(X)}
\right].
$$

Using the transformation $u=F(w)$, we have
$$
-\frac{1}{2}\mathbb E\!\left[
\frac{
\Phi\!\left(
\overline G_{k|n:G}(F(X))
\right)
}
{f(X)}
\right]
=
-\frac12\int_0^\infty
\Phi\!\left(
\overline G_{k|n:G}(F(w))
\right)\,dw
=
J(T_{k|n:G}).
$$

Therefore, $\widehat{J}_1(T_{k|n:G})
\xrightarrow{\text{a.s.}}
J(T_{k|n:G}),$
which establishes the strong consistency of the estimator.
\end{proof}
\subsection{Simulation study}
In this subsection, a Monte Carlo simulation study is conducted to examine the finite-sample performance of the proposed estimator of CREx defined in \eqref{eq3.1} for consecutive $\mathcal C(k|n{:}G)$ systems. The component lifetimes are assumed to follow the Lomax distribution with CDF
\begin{equation*}
F(w)=1-\left(1+\frac{w}{\eta}\right)^{-\beta}, \qquad 
w>0,\ \beta>0,\ \eta>0,
\end{equation*}
where $\beta$ and $\eta$ denote the shape and scale parameters, respectively.
For each selected combination of $n$, $k$, and sample size $N$, independent random samples are generated from the Lomax model, and the estimator proposed in \eqref{eq3.1} is computed. The window size used in the spacing approximation is taken as $m=\lfloor \sqrt{N}\rfloor$, which satisfies the regularity conditions $m\to\infty$ and $\frac{m}{N}\to0$ as $N\to\infty$.
The simulation experiment is repeated $5000$ times for every parameter configuration. The performance of the estimator is evaluated in terms of the empirical bias, root mean squared error (RMSE), and the associated $95\%$ confidence interval (CI). The study is carried out for several choices of $k$ and $n$, with sample sizes $N=20,30,40,~\text{and}~50$. The corresponding numerical results are reported in Tables \ref{tab1}, \ref{tab2} and \ref{tab3} for the parameter combinations
$(\alpha,\lambda)=(2.5,1.0)$,
$(\alpha,\lambda)=(3.5,2.5)$, and
$(\alpha,\lambda)=(1.5,4.5)$, respectively.

The simulation results demonstrate that the proposed estimator performs satisfactorily under different system structures and parameter settings. In particular, as the sample size increases, both the  bias and RMSE decrease steadily, indicating the consistency and improved accuracy of the estimator. Furthermore, the associated $95\%$ CIs narrower as the sample size increases, reflecting the increasing precision and stability of the estimation procedure.
\newpage

\begin{sidewaystable}
\centering
\caption{Simulation results for $\widehat J(T_{k|n:G})$ under the Lomax distribution with parameters $\beta=2.5$ and $\eta=1$, showing the Mean Estimate, Bias, RMSE, and 95\% CIs for different values of $n$, $k$, and sample size $N$.}
\label{tab1}

\renewcommand{\arraystretch}{1.5}
\setlength{\tabcolsep}{3pt}

\scriptsize

\begin{tabular}{cccccccccccccccccc}
\toprule

& &
\multicolumn{4}{c}{$N=20$} &
\multicolumn{4}{c}{$N=30$} &
\multicolumn{4}{c}{$N=40$} &
\multicolumn{4}{c}{$N=50$} \\

\cmidrule(lr){3-6}
\cmidrule(lr){7-10}
\cmidrule(lr){11-14}
\cmidrule(lr){15-18}

$n$ & $k$
& Mean & Bias & RMSE & 95\% CI
& Mean & Bias & RMSE & 95\% CI
& Mean & Bias & RMSE & 95\% CI
& Mean & Bias & RMSE & 95\% CI \\

\midrule

5 & 3
& -0.0554 & 0.0076 & 0.0201 & (-0.0558,-0.0551)
& -0.0564 & 0.0066 & 0.0173 & (-0.0567,-0.0561)
& -0.0569 & 0.0062 & 0.0155 & (-0.0572,-0.0566)
& -0.0571 & 0.0059 & 0.0138 & (-0.0574,-0.0569) \\

5 & 4
& -0.0240 & 0.0090 & 0.0129 & (-0.0242,-0.0239)
& -0.0256 & 0.0075 & 0.0110 & (-0.0258,-0.0255)
& -0.0263 & 0.0068 & 0.0099 & (-0.0264,-0.0262)
& -0.0269 & 0.0062 & 0.0091 & (-0.0270,-0.0268) \\

5 & 5
& -0.0127 & 0.0081 & 0.0097 & (-0.0128,-0.0126)
& -0.0140 & 0.0069 & 0.0084 & (-0.0141,-0.0139)
& -0.0146 & 0.0062 & 0.0077 & (-0.0147,-0.0145)
& -0.0151 & 0.0058 & 0.0071 & (-0.0152,-0.0150) \\

\midrule

7 & 5
& -0.0211 & 0.0090 & 0.0123 & (-0.0213,-0.0209)
& -0.0223 & 0.0077 & 0.0107 & (-0.0225,-0.0222)
& -0.0233 & 0.0068 & 0.0095 & (-0.0234,-0.0231)
& -0.0237 & 0.0063 & 0.0089 & (-0.0238,-0.0236) \\

7 & 6
& -0.0121 & 0.0081 & 0.0096 & (-0.0122,-0.0120)
& -0.0133 & 0.0069 & 0.0083 & (-0.0134,-0.0132)
& -0.0141 & 0.0061 & 0.0075 & (-0.0141,-0.0140)
& -0.0145 & 0.0057 & 0.0070 & (-0.0146,-0.0144) \\

7 & 7
& -0.0076 & 0.0071 & 0.0079 & (-0.0076,-0.0075)
& -0.0085 & 0.0062 & 0.0070 & (-0.0086,-0.0084)
& -0.0091 & 0.0056 & 0.0063 & (-0.0092,-0.0091)
& -0.0096 & 0.0051 & 0.0059 & (-0.0096,-0.0095) \\

\midrule

10 & 8
& -0.0088 & 0.0075 & 0.0084 & (-0.0088,-0.0087)
& -0.0098 & 0.0065 & 0.0074 & (-0.0098,-0.0097)
& -0.0105 & 0.0058 & 0.0067 & (-0.0106,-0.0104)
& -0.0109 & 0.0054 & 0.0063 & (-0.0110,-0.0108) \\

10 & 9
& -0.0059 & 0.0067 & 0.0073 & (-0.0060,-0.0059)
& -0.0069 & 0.0058 & 0.0063 & (-0.0069,-0.0068)
& -0.0074 & 0.0052 & 0.0058 & (-0.0075,-0.0074)
& -0.0078 & 0.0048 & 0.0054 & (-0.0079,-0.0078) \\

10 & 10
& -0.0042 & 0.0060 & 0.0063 & (-0.0042,-0.0042)
& -0.0050 & 0.0052 & 0.0056 & (-0.0050,-0.0050)
& -0.0055 & 0.0047 & 0.0051 & (-0.0055,-0.0055)
& -0.0059 & 0.0044 & 0.0048 & (-0.0059,-0.0058) \\

\midrule

13 & 11
& -0.0047 & 0.0063 & 0.0067 & (-0.0048,-0.0047)
& -0.0056 & 0.0054 & 0.0059 & (-0.0057,-0.0056)
& -0.0061 & 0.0049 & 0.0054 & (-0.0061,-0.0061)
& -0.0064 & 0.0046 & 0.0051 & (-0.0065,-0.0064) \\

13 & 12
& -0.0035 & 0.0057 & 0.0060 & (-0.0035,-0.0034)
& -0.0043 & 0.0049 & 0.0052 & (-0.0043,-0.0042)
& -0.0047 & 0.0045 & 0.0048 & (-0.0047,-0.0047)
& -0.0050 & 0.0042 & 0.0045 & (-0.0050,-0.0050) \\

13 & 13
& -0.0026 & 0.0052 & 0.0054 & (-0.0026,-0.0026)
& -0.0033 & 0.0045 & 0.0047 & (-0.0033,-0.0032)
& -0.0037 & 0.0041 & 0.0043 & (-0.0037,-0.0037)
& -0.0040 & 0.0038 & 0.0041 & (-0.0040,-0.0040) \\

\bottomrule
\end{tabular}
\end{sidewaystable}

\newpage

\begin{sidewaystable}
\centering
\caption{Simulation results for $\widehat J(T_{k|n:G})$ under the Lomax distribution with parameters $\beta=3.5$ and $\eta=2.5$, showing the Mean Estimate, Bias, RMSE, and 95\% CIs for different values of $n$, $k$, and sample size $N$.}
\label{tab2}

\renewcommand{\arraystretch}{1.5}
\setlength{\tabcolsep}{3pt}

\scriptsize

\begin{tabular}{cccccccccccccccccc}
\toprule

& &
\multicolumn{4}{c}{$N=20$} &
\multicolumn{4}{c}{$N=30$} &
\multicolumn{4}{c}{$N=40$} &
\multicolumn{4}{c}{$N=50$} \\

\cmidrule(lr){3-6}
\cmidrule(lr){7-10}
\cmidrule(lr){11-14}
\cmidrule(lr){15-18}

$n$ & $k$
& Mean & Bias & RMSE & 95\% CI
& Mean & Bias & RMSE & 95\% CI
& Mean & Bias & RMSE & 95\% CI
& Mean & Bias & RMSE & 95\% CI \\

\midrule

5 & 3
& -0.0935 & 0.0159 & 0.0352 & (-0.0941,-0.0929)
& -0.0957 & 0.0137 & 0.0298 & (-0.0962,-0.0951)
& -0.0973 & 0.0121 & 0.0265 & (-0.0978,-0.0969)
& -0.0982 & 0.0112 & 0.0241 & (-0.0986,-0.0978) \\

5 & 4
& -0.0418 & 0.0162 & 0.0226 & (-0.0421,-0.0415)
& -0.0440 & 0.0140 & 0.0197 & (-0.0443,-0.0437)
& -0.0455 & 0.0125 & 0.0177 & (-0.0457,-0.0452)
& -0.0467 & 0.0113 & 0.0162 & (-0.0469,-0.0465) \\

5 & 5
& -0.0221 & 0.0147 & 0.0173 & (-0.0223,-0.0219)
& -0.0242 & 0.0126 & 0.0151 & (-0.0243,-0.0240)
& -0.0255 & 0.0113 & 0.0137 & (-0.0256,-0.0253)
& -0.0265 & 0.0103 & 0.0126 & (-0.0266,-0.0263) \\

\midrule

7 & 5
& -0.0362 & 0.0166 & 0.0217 & (-0.0365,-0.0359)
& -0.0387 & 0.0141 & 0.0189 & (-0.0389,-0.0384)
& -0.0405 & 0.0124 & 0.0168 & (-0.0407,-0.0402)
& -0.0413 & 0.0115 & 0.0156 & (-0.0415,-0.0411) \\

7 & 6
& -0.0209 & 0.0147 & 0.0170 & (-0.0211,-0.0207)
& -0.0233 & 0.0124 & 0.0148 & (-0.0234,-0.0231)
& -0.0245 & 0.0112 & 0.0135 & (-0.0246,-0.0243)
& -0.0252 & 0.0104 & 0.0125 & (-0.0254,-0.0251) \\

7 & 7
& -0.0132 & 0.0129 & 0.0141 & (-0.0133,-0.0130)
& -0.0150 & 0.0110 & 0.0124 & (-0.0151,-0.0149)
& -0.0161 & 0.0099 & 0.0112 & (-0.0162,-0.0160)
& -0.0168 & 0.0092 & 0.0105 & (-0.0169,-0.0167) \\

\midrule

10 & 8
& -0.0152 & 0.0136 & 0.0151 & (-0.0153,-0.0151)
& -0.0171 & 0.0117 & 0.0132 & (-0.0172,-0.0170)
& -0.0184 & 0.0104 & 0.0120 & (-0.0185,-0.0182)
& -0.0192 & 0.0096 & 0.0112 & (-0.0193,-0.0191) \\

10 & 9
& -0.0104 & 0.0120 & 0.0129 & (-0.0105,-0.0103)
& -0.0120 & 0.0104 & 0.0113 & (-0.0121,-0.0120)
& -0.0130 & 0.0094 & 0.0104 & (-0.0131,-0.0129)
& -0.0137 & 0.0087 & 0.0097 & (-0.0138,-0.0136) \\

10 & 10
& -0.0073 & 0.0108 & 0.0113 & (-0.0074,-0.0073)
& -0.0088 & 0.0093 & 0.0100 & (-0.0089,-0.0087)
& -0.0096 & 0.0085 & 0.0091 & (-0.0097,-0.0096)
& -0.0102 & 0.0079 & 0.0085 & (-0.0103,-0.0101) \\

\midrule

13 & 11
& -0.0082 & 0.0113 & 0.0120 & (-0.0083,-0.0082)
& -0.0097 & 0.0098 & 0.0106 & (-0.0098,-0.0097)
& -0.0107 & 0.0089 & 0.0096 & (-0.0108,-0.0107)
& -0.0114 & 0.0082 & 0.0090 & (-0.0114,-0.0113) \\

13 & 12
& -0.0061 & 0.0102 & 0.0106 & (-0.0062,-0.0061)
& -0.0074 & 0.0090 & 0.0094 & (-0.0074,-0.0073)
& -0.0083 & 0.0081 & 0.0086 & (-0.0083,-0.0082)
& -0.0089 & 0.0075 & 0.0080 & (-0.0089,-0.0088) \\

13 & 13
& -0.0045 & 0.0094 & 0.0096 & (-0.0046,-0.0045)
& -0.0057 & 0.0082 & 0.0085 & (-0.0058,-0.0057)
& -0.0065 & 0.0074 & 0.0078 & (-0.0066,-0.0065)
& -0.0070 & 0.0069 & 0.0073 & (-0.0070,-0.0069) \\

\bottomrule
\end{tabular}
\end{sidewaystable}

\newpage

\begin{sidewaystable}
\centering
\caption{Simulation results for $\widehat J(T_{k|n:G})$ under the Lomax distribution with parameters $\beta=1.5$ and $\eta=4.5$, showing the Mean Estimate, Bias, RMSE, and 95\% CIs for different values of $n$, $k$, and sample size $N$.}
\label{tab3}

\renewcommand{\arraystretch}{1.5}
\setlength{\tabcolsep}{3pt}

\scriptsize

\begin{tabular}{cccccccccccccccccc}
\toprule

& &
\multicolumn{4}{c}{$N=20$} &
\multicolumn{4}{c}{$N=30$} &
\multicolumn{4}{c}{$N=40$} &
\multicolumn{4}{c}{$N=50$} \\

\cmidrule(lr){3-6}
\cmidrule(lr){7-10}
\cmidrule(lr){11-14}
\cmidrule(lr){15-18}

$n$ & $k$
& Mean & Bias & RMSE & 95\% CI
& Mean & Bias & RMSE & 95\% CI
& Mean & Bias & RMSE & 95\% CI
& Mean & Bias & RMSE & 95\% CI \\

\midrule

5 & 3
& -0.4769 & 0.0304 & 0.1837 & (-0.4805,-0.4734)
& -0.4740 & 0.0333 & 0.1486 & (-0.4769,-0.4712)
& -0.4726 & 0.0348 & 0.1304 & (-0.4750,-0.4701)
& -0.4728 & 0.0346 & 0.1157 & (-0.4749,-0.4706) \\

5 & 4
& -0.1996 & 0.0593 & 0.1008 & (-0.2012,-0.1980)
& -0.2074 & 0.0515 & 0.0872 & (-0.2087,-0.2060)
& -0.2117 & 0.0472 & 0.0782 & (-0.2129,-0.2105)
& -0.2162 & 0.0427 & 0.0709 & (-0.2173,-0.2151) \\

5 & 5
& -0.1036 & 0.0571 & 0.0726 & (-0.1044,-0.1027)
& -0.1122 & 0.0485 & 0.0635 & (-0.1130,-0.1114)
& -0.1162 & 0.0445 & 0.0580 & (-0.1169,-0.1154)
& -0.1199 & 0.0408 & 0.0532 & (-0.1206,-0.1192) \\

\midrule

7 & 5
& -0.1720 & 0.0620 & 0.0960 & (-0.1734,-0.1705)
& -0.1804 & 0.0535 & 0.0824 & (-0.1816,-0.1792)
& -0.1850 & 0.0489 & 0.0740 & (-0.1861,-0.1839)
& -0.1885 & 0.0454 & 0.0688 & (-0.1895,-0.1875) \\

7 & 6
& -0.0977 & 0.0577 & 0.0723 & (-0.0986,-0.0968)
& -0.1063 & 0.0491 & 0.0634 & (-0.1071,-0.1055)
& -0.1107 & 0.0448 & 0.0572 & (-0.1114,-0.1100)
& -0.1141 & 0.0413 & 0.0529 & (-0.1147,-0.1134) \\

7 & 7
& -0.0606 & 0.0519 & 0.0591 & (-0.0612,-0.0601)
& -0.0680 & 0.0445 & 0.0519 & (-0.0685,-0.0675)
& -0.0727 & 0.0398 & 0.0469 & (-0.0732,-0.0723)
& -0.0749 & 0.0376 & 0.0442 & (-0.0754,-0.0745) \\

\midrule

10 & 8
& -0.0701 & 0.0544 & 0.0634 & (-0.0708,-0.0695)
& -0.0776 & 0.0469 & 0.0558 & (-0.0782,-0.0770)
& -0.0825 & 0.0421 & 0.0505 & (-0.0830,-0.0819)
& -0.0860 & 0.0385 & 0.0470 & (-0.0866,-0.0855) \\

10 & 9
& -0.0475 & 0.0489 & 0.0540 & (-0.0480,-0.0471)
& -0.0543 & 0.0421 & 0.0475 & (-0.0548,-0.0539)
& -0.0583 & 0.0382 & 0.0434 & (-0.0587,-0.0579)
& -0.0608 & 0.0357 & 0.0406 & (-0.0612,-0.0604) \\

10 & 10
& -0.0332 & 0.0444 & 0.0474 & (-0.0335,-0.0329)
& -0.0393 & 0.0383 & 0.0417 & (-0.0396,-0.0390)
& -0.0428 & 0.0348 & 0.0382 & (-0.0431,-0.0425)
& -0.0455 & 0.0321 & 0.0354 & (-0.0458,-0.0452) \\

\midrule

13 & 11
& -0.0379 & 0.0461 & 0.0497 & (-0.0382,-0.0375)
& -0.0442 & 0.0397 & 0.0438 & (-0.0446,-0.0439)
& -0.0478 & 0.0362 & 0.0403 & (-0.0482,-0.0475)
& -0.0503 & 0.0337 & 0.0376 & (-0.0506,-0.0499) \\

13 & 12
& -0.0277 & 0.0420 & 0.0443 & (-0.0280,-0.0275)
& -0.0332 & 0.0366 & 0.0392 & (-0.0335,-0.0329)
& -0.0368 & 0.0329 & 0.0358 & (-0.0371,-0.0366)
& -0.0392 & 0.0306 & 0.0334 & (-0.0394,-0.0389) \\

13 & 13
& -0.0206 & 0.0386 & 0.0400 & (-0.0208,-0.0204)
& -0.0256 & 0.0336 & 0.0353 & (-0.0259,-0.0254)
& -0.0289 & 0.0303 & 0.0323 & (-0.0291,-0.0287)
& -0.0310 & 0.0282 & 0.0302 & (-0.0312,-0.0308) \\

\bottomrule
\end{tabular}
\end{sidewaystable}
\subsection{Real data illustration}
Let $X_1,X_2,\ldots,X_{46}$ denote the observed COVID--19 vaccination rates (measured as doses administered per 100 individuals) collected from 46 countries in southern Africa. These observations were previously analyzed in \cite{almongy2022}. The recorded data are given below:
0.042, 0.205, 0.285, 0.319, 0.464, 0.550, 0.889, 0.895, 0.939, 0.986, 1.000, 1.088, 1.212, 1.244, 1.450, 1.593, 1.844, 2.039, 2.157, 2.167, 2.334, 2.440, 2.657, 3.685, 3.879, 4.493, 4.800, 4.944, 5.155, 5.674, 7.602, 10.004, 12.238, 12.520, 12.553, 13.063, 15.105, 15.229, 15.629, 15.848, 18.641, 18.940, 29.885, 58.162, 61.838, 72.286.

To model the data, we examine four continuous probability distributions, namely Weibull, Normal, Gamma, and Lomax distributions, with corresponding PDFs given as follows:
\begin{itemize}
\item \textbf{Weibull distribution}
$$
f(x;\alpha_1,\alpha_2)=\frac{\alpha_1}{\alpha_2}
\left(\frac{x}{\alpha_2}\right)^{\alpha_1-1}
\exp\left[-\left(\frac{x}{\alpha_2}\right)^{\alpha_1}\right],
\qquad x\geq 0.
$$
\item \textbf{Normal distribution}
$$
f(x;\mu,\sigma)=\frac{1}{\sigma\sqrt{2\pi}}
\exp\left[-\frac{(x-\mu)^2}{2\sigma^2}\right],
\qquad x\in\mathbb{R}.
$$
\item \textbf{Gamma distribution}
$$
f(x;k,r)=\frac{r^k}{\Gamma(k)}x^{k-1}e^{-rx},
\qquad x>0.
$$
\item \textbf{Lomax distribution}
$$
f(x;\beta,\eta)=\frac{\beta}{\eta}
\left(1+\frac{x}{\eta}\right)^{-(\beta+1)},
\qquad x\geq 0.
$$
\end{itemize}

The suitability of these models is assessed through the Kolmogorov--Smirnov goodness-of-fit procedure. The estimated parameters together with the corresponding AIC, BIC, and K--S $p$-values are presented in Table~\ref{tab:fit}.
\begin{table}[H] \centering \caption{Goodness-of-Fit results.} \label{tab:fit} \begin{tabular}{lcccc} \toprule \textbf{Distribution} & \textbf{Parameters} & \textbf{AIC} & \textbf{BIC} & \textbf{$p$-value} \\ \midrule Weibull & $\alpha_1=0.6857,\alpha_2=7.3422$ & 291.6627 & 295.3200 & 0.6215 \\ Normal & $\mu=9.8037,\sigma=15.8544$ & 388.7797 & 392.4370 & 0.0020 \\ Gamma & $k=0.5818,r=0.0593$ & 294.7664 & 298.4237 & 0.3131 \\ Lomax & $\beta=1.3938,\eta=5.585$ & 289.7433 & 293.4006 & 0.6415 \\ \bottomrule \end{tabular} \end{table}
Among the fitted models, the Lomax distribution provides the most adequate representation of the data, as it attains the minimum AIC and BIC values together with the largest K--S $p$-value. Hence, the Lomax model is selected for further analysis.
Using the fitted Lomax parameters, we evaluate the theoretical CREx associated with the $\mathcal C(k|n{:}G)$ systems. In addition, the non-parametric estimator introduced in \eqref{eq3.1} is computed from the observed sample.

The numerical findings reported in Table~\ref{tab4} indicate that the estimated CREx values are generally in close agreement with their corresponding theoretical values. The approximation improves noticeably as the value of $k$ increases, which supports the behavior observed in the simulation study. However, a moderate discrepancy is observed for the cases $(n,k)=(5,3)$, $(5,4)$, and $(5,5)$. This variation may arise from the greater influence of tail observations on systems with smaller configurations, where non-parametric estimation becomes comparatively less stable. Overall, the estimator demonstrates satisfactory performance and effectively captures the theoretical behavior of the proposed measure. 
\begin{sidewaystable}[htbp]
\centering
\caption{Comparison of estimated  and theoretical values of CREx with absolute error for COVID-19 vaccination rates data.}
\label{tab4}

\renewcommand{\arraystretch}{1.5}
\setlength{\tabcolsep}{7pt}

\scriptsize

\begin{tabular}{c ccc ccc ccc ccc}
\toprule

\multirow{2}{*}{$k$}

& \multicolumn{3}{c}{$n=5$}
& \multicolumn{3}{c}{$n=7$}
& \multicolumn{3}{c}{$n=10$}
& \multicolumn{3}{c}{$n=13$} \\

\cmidrule(lr){2-4}
\cmidrule(lr){5-7}
\cmidrule(lr){8-10}
\cmidrule(lr){11-13}

& $\hat J(T_{k|n:G})$
& $J(T_{k|n:G})$
& Abs.\ Error

& $\hat J(T_{k|n:G})$
& $J(T_{k|n:G})$
& Abs.\ Error

& $\hat J(T_{k|n:G})$
& $J(T_{k|n:G})$
& Abs.\ Error

& $\hat J(T_{k|n:G})$
& $J(T_{k|n:G})$
& Abs.\ Error
\\

\midrule

3
& -0.5867 & -0.6871 & 0.1004
& -1.0125 & -1.1275 & 0.1150
& -1.9078 & -2.0368 & 0.1290
& -3.1109 & -3.2446 & 0.1337
\\

4
& -0.2669 & -0.3487 & 0.0818
& -0.4303 & -0.5388 & 0.1085
& -0.7729 & -0.9313 & 0.1584
& -1.2323 & -1.4527 & 0.2204
\\

5
&         &         &
& -0.2358 & -0.3147 & 0.0789
& -0.4038 & -0.5188 & 0.1150
& -0.6284 & -0.7898 & 0.1614
\\

6
&         &         &
& -0.1493 & -0.2087 & 0.0594
& -0.2451 & -0.3281 & 0.0830
& -0.3729 & -0.4867 & 0.1138
\\

7
&         &         &
&         &         &
& -0.1633 & -0.2266 & 0.0633
& -0.2434 & -0.3273 & 0.0839
\\

8
&         &         &
&         &         &
& -0.1159 & -0.1670 & 0.0511
& -0.1695 & -0.2349 & 0.0654
\\

9
&         &         &
&         &         &
& -0.0861 & -0.1293 & 0.0432
& -0.1238 & -0.1772 & 0.0534
\\

10
&         &         &
&         &         &
&         &         &
& -0.0937 & -0.1390 & 0.0453
\\

11
&         &         &
&         &         &
&         &         &
& -0.0729 & -0.1125 & 0.0396
\\

12
&         &         &
&         &         &
&         &         &
& -0.0581 & -0.0934 & 0.0353
\\

\bottomrule
\end{tabular}
\end{sidewaystable}

\section{Conclusion}
In this study, we investigated CREx of $\mathcal C(k|n{:}G)$ systems. An explicit expression for  CREx measure was derived for systems consisting of iid components and illustrated through a suitable example. The obtained expressions provide a useful framework for understanding the uncertainty and information characteristics associated with consecutive $\mathcal C(k|n{:}G)$ systems.
Further, several important properties of CREx were explored under different stochastic ordering relations.  We also derived several upper and lower bounds for the CREx of consecutive systems, and numerical illustrations were provided. A significant contribution of the paper lies in the characterization results developed through the CREx measure. In particular, characterization results associated with the dispersive order were established using CREx of $\mathcal C(k|n{:}G)$ systems. It was shown that, for all systems satisfying the condition $(2k \geq n),$ equality of the proposed measure holds if and only if the underlying parent distributions differ by a location shift. In addition, characterization results involving both location and scale transformations were also obtained. Furthermore, we extended CREx to the dynamic version for $\mathcal C(k|n{:}G)$ systems and established a relationship between DCREx and MRL.
To demonstrate the practical applicability of the proposed methodology, a non-parametric estimator of CREx was proposed and its consistency property was established. The finite-sample performance of the estimator was further examined through extensive Monte Carlo simulation studies under different system configurations. Moreover, the usefulness of the proposed methodology was illustrated through real data analysis, confirming its effectiveness in practical reliability applications. 

The results presented in this work open several directions for future research, including the study of a weighted version of CREx for $\mathcal C(k|n{:}G)$ systems, a CDF-based extension, and broader applications in statistical learning and information-theoretic image analysis.
\subsection*{Author's contributions}
Aman Pandey:  Conceptualization, data analysis, interpretation of the results, and drafting of the initial manuscript.
\noindent\\
Chanchal Kundu: Supervision, study conception and design, critical revision of the manuscript, and final approval of the version to be published.
\subsection*{Funding}
No funding was received for conducting this study.
\subsection*{Data availability}
All the datasets used in this study have been properly cited within the manuscript.
\subsection*{Statements and declarations}
\textbf{Competing interests:}
	On behalf of all authors, the corresponding author states that there is no conflict of interest.


\begin{thebibliography}{99}

\bibitem[Almongy et al.(2022)]{almongy2022}
Almongy, H. M., Almetwally, E. M., Haj Ahmad, H., and Al-nefaie, A. (2022). Modeling of COVID-19 vaccination rate using odd Lomax inverted Nadarajah--Haghighi distribution. \textit{PLoS ONE}, \textbf{17}(10), 0276181.

\bibitem[Chakraborty and Pradhan(2024)]{chak2024}
Chakraborty, S., and Pradhan, B. (2024). On cumulative residual extropy of coherent and mixed systems. \textit{Annals of Operations Research}, \textbf{340}, 59--81.

\bibitem[Chakraborty and Pradhan(2025)]{chak2025}
Chakraborty, S., and Pradhan, B. (2025). On estimation of cumulative residual extropy and its quantile version. \textit{Ricerche di Matematica}, \textbf{74}. 1165--1176.

\bibitem[Dembińska and Jasiński(2025)]{dem2025}Dembińska, A. and Jasiński, K. (2025). Maximum likelihood inference about parameters of geometric lifetimes of heterogeneous components from data collected till failure of a $k$-out-of-$n{:}G$ system. \textit{Journal of Computational and Applied Mathematics}, \textbf{454}, 116195.


\bibitem[Eryilmaz(2009)]{ery2009}
Eryilmaz, S. (2009). Reliability properties of consecutive $k$-out-of-$n$ systems of arbitrarily dependent components. \textit{Reliability Engineering \& System Safety}, \textbf{94}(2), 350--356.

\bibitem[Eryilmaz(2010)]{ery2010}Eryilmaz, S. (2010). Conditional lifetimes of consecutive  $k$-out-of-$n$  Systems, \textit{IEEE Transactions on Reliability}. \textbf{59}(1), 178--182.

\bibitem[Eryilmaz(2026)]{ery2026}
Eryilmaz, S. (2026). On reliability of consecutive $k$-out-of-$n:G$ system equipped with protection blocks. \textit{International Journal of General Systems}, \textbf{55}(2), 129--144.

\bibitem[Eryilmaz and Navarro(2012)]{ery2012}
Eryilmaz, S., and Navarro, J. (2012). Failure rates of consecutive $k$-out-of-$n$ systems. \textit{Journal of the Korean Statistical Society}, \textbf{41}(1), 1--11.

\bibitem[Jahanshahi et al.(2019)]{jahan2019}
Jahanshahi, S. M. A., Zarei, H., and Khammar, A. H. (2019). On cumulative residual extropy. \textit{Probability in the Engineering and Informational Sciences}, \textbf{34}(4), 605--625.

\bibitem[Jewitt(1989)]{jew1989}
Jewitt, I. (1989). Choosing between risky prospects: The characterization of comparative statics results, and location independent risk. \textit{Management Science}, \textbf{35}(1), 60--70.

\bibitem[Kayid and Alshehri(2024)]{kayid2024}
Kayid, M., and Alshehri, M. A. (2024). Shannon differential entropy properties of consecutive $k$-out-of-$n:G$ systems. \textit{Operations Research Letters}, \textbf{57}, 107190.

\bibitem[Kayid and Balakrishnan(2025)]{kayid2025}
Kayid, M., and Balakrishnan, N. (2025). Cumulative residual entropy of linear consecutive $k$-out-of-$n:G$ systems and their applications. \textit{Methodology and Computing in Applied Probability}, \textbf{27}(2), 48.

\bibitem[Kazemi et al.(2021)]{kaz2021}Kazemi, M.R., Tahmasebi, S., Calì, C. and Longobardi, M. (2021). Cumulative residual extropy of minimum ranked set sampling with unequal samples, \textit{Results in Applied Mathematics}, 10, 100156.

\bibitem[Kattumannil and Sreedevi(2022)]{kattu2022}Kattumannil, S.K. and Sreedevi, E.P. (2022). Non-parametric estimation of cumulative (residual) extropy, \textit{Statistics \& Probability Letters}, 185, 109434.

\bibitem[Lad et al.(2015)]{lad2015}
Lad, F., Sanfilippo, G., and Agro, G. (2015). Extropy: Complementary dual of entropy. \textit{Statistical Science}, \textbf{30}(1), 40--58.

\bibitem[Landsberger and Meilijson(1994)]{land1994}
Landsberger, M., and Meilijson, I. (1994). The generating process and an extension of Jewitt’s location independent risk concept. \textit{Management Science}, \textbf{40}(5), 662--669.

\bibitem[Noughabi and Arghami(2011)]{nou2011}
Noughabi, H. A., and Arghami, N. R. (2011). Testing exponentiality based on characterizations of the exponential distribution. \textit{Journal of Statistical Computation and Simulation}, \textbf{81}(11), 1641--1651.

\bibitem[Pakdaman and Noughabi(2025)]{pakda2025}Pakdaman Z., Noughabi R.A. (2025). On the study of the cumulative residual extropy of mixed used systems and their complexity. \textit{Probability in the Engineering and Informational Sciences}, \textbf{39}(1):122-140. 

\bibitem[Qiu et al.(2019)]{qiu2019}
Qiu, G., Wang, L., and Wang, X. (2019). On extropy properties of mixed systems. \textit{Probability in the Engineering and Informational Sciences}, \textbf{33}(3), 471--486.

\bibitem[Sathar and Nair(2021)]{sathar2021}
Sathar, E. I. A., and Nair, R. D. (2021). On dynamic survival extropy. \textit{Communications in Statistics -- Theory and Methods}, \textbf{50}(6), 1295--1313.

\bibitem[Shaked and Shanthikumar(2007)]{shaked2007}
Shaked, M., and Shanthikumar, J. G. (2007). \textit{Stochastic Orders}. Springer, New York.

\bibitem[Shannon(1948)]{shannon1948}
Shannon, C. E. (1948). A mathematical theory of communication. \textit{Bell System Technical Journal}, \textbf{27}(3), 379--423.

\bibitem[Tian and Lu(2026)]{tian2026}
Tian, X., and Lu, B. (2026). Weighted Shannon differential entropy for consecutive $k$-out-of-$n:G$ systems: Properties, bounds, estimation and applications. \textit{Methodology and Computing in Applied Probability}, \textbf{28}, 45.

\bibitem[Vasicek(1976)]{vas1976}
Vasicek, O. (1976). A test for normality based on sample entropy. \textit{Journal of the Royal Statistical Society: Series B}, \textbf{38}(1), 54--59.

\bibitem[Yi et al.(2026)]{yi2026}
Yi, H., Balakrishnan, N., and Li, X. (2026). A general type of linear consecutive-$k$ systems. \textit{Methodology and Computing in Applied Probability}, \textbf{28}, 14.
\end{thebibliography}
\end{document}